\def\co{\colon\thinspace}
\newcommand{\CP}{\mathbb{C}\mathrm{P}}
\newcommand{\rmd}{\mathrm{d}}
\newcommand{\rme}{\mathrm{e}}
\newcommand{\frakp}{\mathfrak{p}}
\newcommand{\R}{\mathbb{R}}
\newcommand{\dR}{\mathrm{dR}}
\newcommand{\bfv}{\mathbf{v}}
\newcommand{\ttx}{\mathtt{x}}
\newcommand{\tty}{\mathtt{y}}
\newcommand{\Z}{\mathbb{Z}}
\DeclareMathOperator{\Int}{\mathrm{Int}}
\newtheorem{thm}{Theorem}[section]
\newtheorem{lem}[thm]{Lemma}
\newtheorem{prop}[thm]{Proposition}
\newtheorem{cor}[thm]{Corollary}
\theoremstyle{definition}
\newtheorem{defn}[thm]{Definition}
\theoremstyle{remark}
\newtheorem{rem}[thm]{Remark}
\newtheorem*{ack}{Acknowledgement}
\begin{document}

\title{Contact structures on principal circle bundles}

\author{Fan Ding}
\address{Department of Mathematics, Peking University,
Beijing 100871, P.~R. China}
\email{dingfan@math.pku.edu.cn}
\author{Hansj\"org Geiges}
\address{Mathematisches Institut, Universit\"at zu K\"oln,
Weyertal 86--90, 50931 K\"oln, Germany}
\email{geiges@math.uni-koeln.de}

\date{}

\subjclass[2010]{53D35, 55R25, 57R17, 57R22}

\thanks{F.~D.\ partially supported by
grant no.\ 10631060 of the National Natural Science Foundation
of China and a DAAD -- K.~C.~Wong fellowship,
grant no.\ A/09/99005, at the Universit\"at zu K\"oln.}

\begin{abstract}
We describe a necessary and sufficient condition for a principal
circle bundle over an even-dimensional manifold to carry an invariant
contact structure. As a corollary it is shown that all circle bundles
over a given base manifold carry an invariant contact structure, only
provided the trivial bundle does. In particular, all
circle bundles over $4$-manifolds admit invariant contact structures.
We also discuss the Bourgeois construction of contact structures
on odd-dimensional tori in this context, and we relate our results
to recent work of Massot, Niederkr\"uger and Wendl
on weak symplectic fillings in higher dimensions.
\end{abstract}

\maketitle

\section{Introduction}
The study of invariant contact structures on principal $S^1$-bundles
can be traced back to the work of Boothby and Wang~\cite{bowa58},
cf.~\cite[Section 7.2]{geig08}. They observed that
if the Euler class of the bundle can be represented by
a symplectic form on the base, a suitable connection $1$-form
will be an invariant contact form. Their main result was that any contact
form whose Reeb vector field is regular essentially arises in
this way. These contact structures are transverse to the $S^1$-fibres.

The classification of invariant contact structures on
$S^1$-bundles over surfaces was obtained by Lutz~\cite{lutz77}.
In the present paper we extend his results to higher
dimensions. We derive a necessary
(Proposition~\ref{prop:decomposition}) and sufficient
(Theorem~\ref{thm:existence}) condition for an $S^1$-bundle
to admit an invariant contact structure. Our explicit way of building
an invariant contact structure from a certain symplectic splitting
of the base manifold is inspired by the work of Giroux~\cite{giro91}
on convex hypersurfaces (corresponding to $\R$-invariant contact
structures), and it extends a construction by Stipsicz and
the second author~\cite{gest10} from trivial to nontrivial
bundles. In fact, as a consequence of our existence criterion we
can show that if the trivial $S^1$-bundle over a given base
manifold admits an invariant contact structure, then the same
is true for all nontrivial $S^1$-bundles (Corollary~\ref{cor:bundles}).
Combining this with the main result from~\cite{gest10}
we conclude that all $S^1$-bundles over $4$-manifolds carry
invariant contact structures.

In Section~\ref{subsection:Bourgeois} we discuss the relation of the
results in the present paper with Bourgeois's
construction of $T^2$-invariant contact structures
on $N\times T^2$, where $N$ is any closed contact manifold,
starting from an open book decomposition of~$N$.

In Section~\ref{subsection:twist} we demonstrate by an
example that the topology of
the total space of the $S^1$-bundle is not, in general,
determined by the symplectic splitting of the base.
However, as we prove in Section~\ref{section:Liouville},
once the topology of the bundle is fixed, the symplectic
splitting of the base determines the invariant contact structure
on the $S^1$-bundle up to equivariant diffeomorphism. This
classification result is formulated in terms of ideal
Liouville domains, a notion introduced by Giroux
and discussed at length by Massot et al.~\cite{mnw}.
\section{Conventions}
\label{section:conventions}
This section merely serves to fix our normalisation conventions
regarding principal connections.
Let $\pi\co M\rightarrow B$ be a principal $S^1$-bundle.
Write $\partial_{\theta}$ for the vector field on $M$
generating the $S^1$-action. By a connection $1$-form $\psi$ we mean an
$S^1$-invariant form on $M$, i.e.\ $L_{\partial_{\theta}}\psi\equiv 0$,
normalised by $\psi(\partial_{\theta})\equiv 1$.
Up to a factor $2\pi$ this $\psi$ is what Bott--Tu~\cite{botu82}
call the global angular form.

The $2$-form $\rmd\psi$ is then $S^1$-invariant and horizontal, where
the latter means that $i_{\partial_{\theta}}\rmd\psi\equiv 0$. It follows
that $\rmd\psi$ induces a closed $2$-form $\omega$ on~$B$, that is,
$\rmd\psi=\pi^*\omega$. This $2$-form $\omega$ is called the curvature form of
the connection~$\psi$. The (real) Euler class of the $S^1$-bundle is the
cohomology class given by $e=-[\omega/2\pi]\in H^2_{\dR}(B)$,
where $H^*_{\dR}$ denotes de~Rham cohomology. This Euler class is an
integral cohomology class in the sense that it lives in the image of the
inclusion $H^2(B;\Z)\subset H^2_{\dR}(B)$.

Given a further closed $2$-form $\omega'$ on $B$ with $[\omega']=[\omega]\in
H^2_{\dR}(B)$, one can find a connection $1$-form $\psi'$ with
$\rmd\psi'=\pi^*\omega'$. Indeed, we have $\omega'=\omega+\rmd\gamma$ for
some $1$-form $\gamma$ on~$B$, and we may then set $\psi'=\psi+\pi^*\gamma$.
The difference between two connection $1$-forms with the same
curvature form is a closed horizontal $1$-form.

By slight abuse of notation we shall not usually distinguish
between an $S^1$-invariant and horizontal differential form
on $M$ and the induced form on~$B$.

Principal $S^1$-bundles over $B$ are classified by their
Euler class in integral cohomology $H^2(B;\Z)$. The
real Euler class $e\in H^2_{\dR}(B)$ does not
completely classify $S^1$-bundles in the presence of torsion
in $H^2(B;\Z)$; there are nontrivial $S^1$-bundles with flat
connections, see~\cite{koba56}. This issue will be of relevance
in Section~\ref{section:Liouville} only.
\section{Symplectic decompositions}
\label{section:decomposition}
Assume now that $B$ is a closed, connected, oriented $2n$-manifold
and $\pi\co M\rightarrow B$ a principal $S^1$-bundle as
before, with the corresponding orientation on~$M$. Suppose that $M$ admits
an $S^1$-invariant cooriented contact structure $\xi=\ker\alpha$ such that
$\alpha\wedge (\rmd\alpha)^n$ is a (positive) volume form
for this orientation of~$M$. The contact form $\alpha$ may likewise be taken
to be $S^1$-invariant, for we can always pass to the averaged
contact form
\[ \int_{\theta\in S^1}\theta^*\alpha.\]

Then $u:=\alpha(\partial_{\theta})$ defines a smooth $S^1$-invariant
function on~$M$.
With $\psi$ a connection $1$-form on~$M$,
define a $1$-form $\beta$ on $M$ by
\[ \alpha =\beta +u\psi.\]
This $\beta$ is $S^1$-invariant and horizontal. Thus, both the
function $u$ and the $1$-form $\beta$ descend to~$B$.

Write $\omega$ for the curvature form of $\psi$ as in
the previous section.

\begin{lem}
\label{lem:volume}
The $2n$-form
\[ \Omega:= (\rmd\beta +u\omega)^{n-1}\wedge
   [n\beta\wedge\rmd u+u(\rmd\beta+u\omega)] \]
is a volume form on~$B$.
\end{lem}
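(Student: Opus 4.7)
The plan is to compute $\alpha\wedge(\rmd\alpha)^n$ directly from the decomposition $\alpha=\beta+u\psi$, and show that it equals $\pi^*\Omega\wedge\psi$ (after the usual abuse of notation). Since the contact condition guarantees this is a nowhere-zero (and positive) $(2n+1)$-form on $M$, and since $\psi$ is everywhere nonzero with $\psi(\partial_\theta)=1$, the horizontal $S^1$-invariant $2n$-form $\Omega$ on $M$ must be nowhere zero, hence descends to a volume form on $B$.

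First I would differentiate the decomposition, using $\rmd\psi=\pi^*\omega$, to obtain
\[ \rmd\alpha = \rmd u\wedge\psi + (\rmd\beta + u\omega). \]
Setting $A:=\rmd u\wedge\psi$ and $C:=\rmd\beta+u\omega$, the key observation is $A\wedge A=0$ (the factor $\psi\wedge\psi$ vanishes), so by the binomial expansion
\[ (\rmd\alpha)^n = C^n + nA\wedge C^{n-1}. \]

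Next I would expand $\alpha\wedge(\rmd\alpha)^n = (\beta+u\psi)\wedge(C^n+nA\wedge C^{n-1})$ into four terms and eliminate two of them: $\beta\wedge C^n=0$ because it is a horizontal $(2n+1)$-form on the $2n$-dimensional base, and $u\psi\wedge A\wedge C^{n-1}=0$ because of the repeated $\psi$. Collecting the $\psi$-factor in the two surviving terms to the right (harmless, since $C^{n-1}$ and $\rmd u\wedge\beta$ have even total degree once rearranged), one arrives at
\[ \alpha\wedge(\rmd\alpha)^n = \bigl[u C^n + n\beta\wedge\rmd u\wedge C^{n-1}\bigr]\wedge\psi = \Omega\wedge\psi. \]

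Finally, since the left-hand side is a positive volume form on $M$ by hypothesis, and since the orientation of $M$ as an $S^1$-bundle over the oriented base $B$ is compatible with ``base-then-fibre'' (so that $\pi^*\mathrm{vol}_B\wedge\psi$ represents the positive orientation), the identity $\alpha\wedge(\rmd\alpha)^n=\Omega\wedge\psi$ forces $\Omega$ to be a positive volume form on $B$. The only real work is the sign bookkeeping in Step~2; there is no genuine obstacle, as the contact nondegeneracy of $\alpha$ translates verbatim into the nonvanishing of $\Omega$.
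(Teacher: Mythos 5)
Your proof is correct and is exactly the ``straightforward computation'' the paper invokes: expanding $\alpha\wedge(\rmd\alpha)^n$ with $\alpha=\beta+u\psi$, killing the two terms containing $\psi\wedge\psi$ or a horizontal $(2n+1)$-form, and reading off $\psi\wedge\Omega$. No difference in approach, and the sign/degree bookkeeping is handled properly.
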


\begin{proof}
A straightforward computation gives $\alpha\wedge (\rmd\alpha)^n=
\psi\wedge\Omega$.
\end{proof}

The terminology in the following definition is chosen because of the
obvious analogy with the theory of $\R$-invariant contact structures
near convex hypersurfaces in the sense of Giroux~\cite{giro91},
cf.~\cite[Definition~4.3]{giro01}.

\begin{defn}
The {\em dividing set\/} of $B$ induced by the contact structure $\xi$
is the set
\[ \Gamma:=\{ p\in B\co u(p)=0\} .\]
We write
\[ B_{\pm}:=\{ p\in B\co \pm u(p)\geq 0\}, \]
so that $B\setminus\Gamma=\Int(B_+)\sqcup\Int(B_-)$.
\end{defn}

In \cite{lutz77} it was shown that for $\dim B=2$
this dividing set classifies invariant contact structures
up to equivariant diffeomorphism. The importance of this dividing set
even for the classification of non-invariant contact structures
on circle bundles over surfaces was observed by Giroux~\cite{giro01}. 

\begin{lem}
\label{lem:dividing}
The dividing set\/ $\Gamma$ is a (possibly empty)
closed codimension~$1$ submanifold of~$B$.
The $1$-form $\beta_0:=\beta|_{T\Gamma}$ is a contact form on $\Gamma$,
inducing the orientation of\/ $\Gamma$ as the boundary of~$B_+$.
\end{lem}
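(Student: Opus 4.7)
The plan is to read everything off the volume form $\Omega$ from Lemma~\ref{lem:volume}: its nonvanishing forces both the transversality needed for $\Gamma$ to be a submanifold and the contact condition on~$\beta_0$.

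First I would evaluate $\Omega$ at a point $p\in\Gamma$, i.e.\ where $u(p)=0$. Every summand in the expansion of $(\rmd\beta+u\omega)^{n-1}\wedge[n\beta\wedge\rmd u+u(\rmd\beta+u\omega)]$ that carries a factor of $u$ vanishes at~$p$, leaving
\[ \Omega|_p=n\,(\rmd\beta)^{n-1}\wedge\beta\wedge\rmd u. \]
Since $\Omega$ is nowhere zero, this forces $\rmd u|_p\neq 0$. Hence $0$ is a regular value of $u$, and $\Gamma=u^{-1}(0)$ is a closed codimension-one submanifold of~$B$.

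Next I would show that $\beta_0$ is a contact form by testing the identity on a basis $(\mathbf{n},e_1,\dots,e_{2n-1})$ of $T_pB$, where $e_1,\dots,e_{2n-1}$ spans $T_p\Gamma=\ker\rmd u|_p$ and $\mathbf{n}$ is transverse to $\Gamma$, chosen to point out of $B_+$ so that $\rmd u(\mathbf{n})<0$. In the evaluation the $\rmd u$-factor only contracts with $\mathbf{n}$, and after rearranging wedges one finds
\[ \Omega|_p(\mathbf{n},e_1,\dots,e_{2n-1})=-n\,\rmd u(\mathbf{n})\cdot\bigl(\beta_0\wedge(\rmd\beta_0)^{n-1}\bigr)(e_1,\dots,e_{2n-1}). \]
Nonvanishing of $\Omega$ immediately yields the contact condition on~$\beta_0$.

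For the orientation statement I would invoke the ``outward normal first'' convention: $(e_1,\dots,e_{2n-1})$ is positively oriented on $\partial B_+$ precisely when $(\mathbf{n},e_1,\dots,e_{2n-1})$ is positively oriented on~$B$, with $\mathbf{n}$ outward. Positivity of $\Omega$ together with $\rmd u(\mathbf{n})<0$ then forces $\beta_0\wedge(\rmd\beta_0)^{n-1}$ to be positive on such bases, so the contact orientation coincides with the boundary orientation of~$B_+$. The one point that needs care is checking that $\Omega$ really is the positive volume form for the given orientation of~$B$; this follows from the identity $\alpha\wedge(\rmd\alpha)^n=\psi\wedge\Omega$ used in Lemma~\ref{lem:volume}, together with the fact that $\psi$ gives the positive orientation along the $S^1$-fibres. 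Everything else is a direct consequence of the nonvanishing of~$\Omega$.
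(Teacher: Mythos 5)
Your proposal is correct and follows essentially the same route as the paper: both evaluate the volume form $\Omega$ of Lemma~\ref{lem:volume} along $\Gamma$ (where $u=0$ kills every term carrying a factor of~$u$), leaving $n(\rmd\beta)^{n-1}\wedge\beta\wedge\rmd u = -n\,\rmd u\wedge\beta\wedge(\rmd\beta)^{n-1}>0$, from which regularity of $0$ as a value of $u$, the contact condition on $\beta_0$, and the boundary orientation all drop out. You have simply spelled out the sign and basis bookkeeping that the paper leaves implicit.
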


\begin{proof}
From the preceding lemma it follows that
\[ -\rmd u\wedge\beta\wedge (\rmd\beta)^{n-1} >0\]
along~$\Gamma$, and $-\rmd u$ evaluates positively on vectors
pointing out of~$B_+$.
\end{proof}

We now want to show that there are symplectic forms on
$B_{\pm}$ representing the Euler class of the bundle
and compatible with the contact structure $\ker\beta_0$ on
the boundary in the sense of the following definition.

\begin{defn}
\label{defn:weak}
Let $(W,\omega)$ be a compact symplectic manifold of dimension~$2n$,
oriented by the symplectic form~$\omega$, and $\eta=\ker\beta_0$
a cooriented (and hence oriented) contact structure on $\partial W$
inducing the boundary orientation. We say that $(W,\omega)$ is a
{\em weak filling\/} of $(\partial W,\eta)$ if
\[ (\rmd\beta_0)^k\wedge\omega^{n-1-k}|_{\eta}>0\;\;\mbox{\rm for all}\;\;
k=0,\dots ,n-1.\tag{$\mathrm{w}_1$}\]
\end{defn}

Observe that this condition does not depend on the choice
of contact form for~$\eta$ (among forms inducing the given
coorientation). This definition of a weak filling is
essentially the one proposed by Massot et al.~\cite{mnw}
(in contrast with earlier definitions, where only $\omega^{n-1}|_{\eta}>0$
had been required). In fact, they demand
\[ (b\,\rmd\beta_0+\omega)^{n-1}|_{\eta}>0\;\;
\mbox{\rm for all smooth functions $b\co\partial W\rightarrow\R_0^+$,}
\tag{$\mathrm{w}_2$} \]
which on the face of it is weaker than $(\mathrm{w}_1)$.
Lemma~\ref{lem:collar} shows that by a modification in
a collar one can pass from $(\mathrm{w}_2)$ to
$(\mathrm{w}_1)$, see Remark~\ref{rem:weak}.
This observation can be phrased as follows.

\begin{prop}
\label{prop:weak}
Weak symplectic fillability in the sense of\/ $(\mathrm{w}_2)$
is equivalent to fillability in the sense of\/ $(\mathrm{w}_1)$.
\qed
\end{prop}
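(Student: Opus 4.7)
My plan is to prove the two implications separately. The direction $(\mathrm{w}_1)\Rightarrow(\mathrm{w}_2)$ is a direct algebraic computation on $\partial W$, while $(\mathrm{w}_2)\Rightarrow(\mathrm{w}_1)$ is the substantive direction and is supplied by the cited Lemma~\ref{lem:collar}.

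For $(\mathrm{w}_1)\Rightarrow(\mathrm{w}_2)$, I would keep the given filling $(W,\omega)$ and, for any smooth $b\co\partial W\to\R_0^+$, expand
\[
(b\,\rmd\beta_0+\omega)^{n-1}\big|_\eta
=\sum_{k=0}^{n-1}\binom{n-1}{k}b^k\,(\rmd\beta_0)^k\wedge\omega^{n-1-k}\big|_\eta
\]
by the binomial theorem, which applies because $\rmd\beta_0$ and $\omega$ are $2$-forms and therefore commute under the wedge product. By $(\mathrm{w}_1)$, each factor $(\rmd\beta_0)^k\wedge\omega^{n-1-k}|_\eta$ is a strictly positive multiple of a fixed orientation form on~$\eta$, and each $b^k$ is nonnegative. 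Even at points where $b$ vanishes, the $k=0$ contribution $\omega^{n-1}|_\eta$ is strictly positive, so the entire sum is positive everywhere on $\partial W$, yielding $(\mathrm{w}_2)$ for the same $(W,\omega)$.

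For $(\mathrm{w}_2)\Rightarrow(\mathrm{w}_1)$, I would start from $(W,\omega)$ satisfying $(\mathrm{w}_2)$, pick a collar of $\partial W$, and invoke Lemma~\ref{lem:collar} to produce a symplectic form $\omega'$ on $W$ that agrees with $\omega$ outside the collar, still induces $\ker\beta_0$ as the boundary contact structure, and whose restriction to $\eta$ now satisfies the term-by-term positivity $(\rmd\beta_0)^k\wedge\omega'^{n-1-k}|_\eta>0$ for every $k=0,\dots,n-1$. Then $(W,\omega')$ is the desired $(\mathrm{w}_1)$-filling, and the proposition follows.

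The main obstacle is of course packaged into Lemma~\ref{lem:collar} rather than into the proposition itself: one must build a collar deformation that preserves the global symplectic condition on $W$ while simultaneously converting the sum-type positivity of $(\mathrm{w}_2)$ at the boundary into the pointwise positivity required by~$(\mathrm{w}_1)$. A natural ansatz is $\omega'=\omega+\rmd(f(t)\beta_0)$ on a collar $(-\epsilon,0]\times\partial W$ with $f(-\epsilon)=0$ and $f(0)$ taken large, so that the leading-order contribution $f(0)^{n-1-k}(\rmd\beta_0)^{n-1-k}$ dominates each expansion of $\omega'^{n-1-k}|_\eta$ and forces the wedge with $(\rmd\beta_0)^k$ into the positive cone; one then verifies nondegeneracy of $\omega'^n$ throughout the collar by tuning the growth rate of~$f$, and uses $(\mathrm{w}_2)$ with $b=f(0)$ to certify the borderline $k=0$ case.
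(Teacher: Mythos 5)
Your proof is correct and follows essentially the same route as the paper: the substantive implication $(\mathrm{w}_2)\Rightarrow(\mathrm{w}_1)$ is delegated to Lemma~\ref{lem:collar} (whose key mechanism---inflating the $\rmd\beta_0$ contribution in a collar until the $(\rmd\beta_0)^{n-1}$ term dominates each expansion, with the $k=0$ case certified directly by $(\mathrm{w}_2)$ evaluated at the constant function---you sketch accurately), while the converse is a pointwise binomial expansion. The only cosmetic difference is that you write out the easy direction $(\mathrm{w}_1)\Rightarrow(\mathrm{w}_2)$ explicitly, whereas the paper treats it as obvious; and the paper's Remark~\ref{rem:weak} allows itself the freedom to attach an extra trivial collar $[0,R]\times\Gamma$ rather than absorbing the required largeness entirely into the rescaling $\beta\mapsto K\beta$ of Lemma~\ref{lem:collar}, but these amount to the same thing.
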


So for our purposes $(\mathrm{w}_1)$ and
$(\mathrm{w}_2)$ can be used interchangeably.
Beware, though, that a given symplectic filling may satisfy
$(\mathrm{w}_2)$, but not $(\mathrm{w}_1)$.

\begin{prop}
\label{prop:decomposition}
Given a principal $S^1$-bundle $\pi\co M\rightarrow B$ of Euler class
$e$ with an $S^1$-invariant contact structure~$\xi$, then with
notation as above the following holds.
There are symplectic forms $\omega_{\pm}$ on $\pm B_{\pm}$,
where $-B_-$ denotes $B_-$ with reversed orientation,
such that
\begin{enumerate}
\item[(i)] $\mp [\omega_{\pm}/2\pi]=e|_{B_{\pm}}$;
\item[(ii)] if\/ $\Gamma$ is non-empty, then $(\pm B_{\pm},\omega_{\pm})$
are weak fillings of\/ $(\Gamma,\ker\beta_0)$.
\end{enumerate}
\end{prop}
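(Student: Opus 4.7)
The plan is to construct $\omega_+$ on $B_+$; the form $\omega_-$ on $-B_-$ arises from the analogous argument, with the orientation reversal accounting for the fact that $u<0$ on $\Int(B_-)$. The starting point is a rescaling on $\pi^{-1}(\Int(B_+))$: since $u>0$ there, the $1$-form $\alpha/u$ is again a positive contact form with $(\alpha/u)(\partial_{\theta})=1$, and
\[ \rmd(\alpha/u)=\rmd(\beta/u)+\pi^*\omega \]
is $S^1$-invariant and horizontal, hence descends to a closed $2$-form $\sigma_+:=\rmd(\beta/u)+\omega$ on $\Int(B_+)$. The contact condition for $\alpha/u$ translates to $\psi\wedge\sigma_+^n>0$, so $\sigma_+$ is symplectic, in the cohomology class $[\omega|_{\Int(B_+)}]=-2\pi e|_{\Int(B_+)}$ demanded by~(i).

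The obstacle is that $\sigma_+$ has a pole of order two along $\Gamma$ and does not extend smoothly across the boundary. My remedy is to modify $\sigma_+$ in a collar of $\Gamma$. Since $\rmd u\ne 0$ along $\Gamma$, there is a collar $\Gamma\times[0,\varepsilon)_u\hookrightarrow B_+$ in which $u$ is the collar coordinate. Choose a smooth positive function $g\co [0,\varepsilon)\to\R$ with $g(u)=1/u$ for $u\ge\varepsilon/2$ and $g(0)=K$ for some large $K>0$, and set
\[ \omega_+:=\omega+\rmd\bigl(g(u)\beta\bigr) \]
inside the collar, extending by $\sigma_+$ outside. Because the two expressions agree on $\{u\ge\varepsilon/2\}$, this gives a smooth closed $2$-form on $B_+$; the modification is by an exact form, so $[\omega_+]=[\omega|_{B_+}]$, proving~(i).

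For~(ii) I restrict to $T\Gamma$: the $\rmd u\wedge\beta$ term vanishes, leaving
\[ \omega_+|_{T\Gamma}=K\,\rmd\beta_0+\omega|_{T\Gamma}. \]
Since $\beta_0$ is contact, $(\rmd\beta_0)^{n-1}|_{\ker\beta_0}>0$; for $K$ sufficiently large the leading contribution then dominates every wedge product and
\[ (\rmd\beta_0)^k\wedge\omega_+^{n-1-k}|_{\ker\beta_0}>0,\qquad k=0,\ldots,n-1, \]
giving the weak filling condition $(\mathrm{w}_1)$.

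The main difficulty I anticipate is verifying $\omega_+^n>0$ throughout the collar. Expanding $(\omega+g'\,\rmd u\wedge\beta+g\,\rmd\beta)^n$, the leading behaviour at $u=0$ involves terms sensitive to $\partial_u\beta|_{\Gamma}$, so a naive choice of $g$ need not suffice. My plan is to exploit the positivity of $\Omega$ from Lemma~\ref{lem:volume} together with the freedom to tune the profile of $g$ (and to shrink $\varepsilon$) so that symplecticity is preserved across the interpolation. If a direct verification proves cumbersome, an alternative is first to produce a closed $2$-form satisfying only the weaker condition $(\mathrm{w}_2)$ on $\Gamma$ and then invoke Proposition~\ref{prop:weak} (via the collar modification of Lemma~\ref{lem:collar}) to upgrade to $(\mathrm{w}_1)$.
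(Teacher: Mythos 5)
Your treatment of the interior agrees with the paper's. The identity $\Omega=u^{n+1}\bigl(\rmd(\beta/u)+\omega\bigr)^n$ on $B\setminus\Gamma$ (which the paper reads off from Lemma~\ref{lem:volume}, and which you obtain equivalently by rescaling $\alpha$ to $\alpha/u$) gives the same symplectic forms $\sigma_{\pm}=\pm\bigl(\rmd(\beta/u)+\omega\bigr)$ on $\Int(\pm B_{\pm})$ and the same computation of their cohomology class. Where you genuinely diverge is at $\Gamma$. The paper never extends $\sigma_{\pm}$ across the boundary: it retreats to the shrunken copies $B_{\pm}^{\varepsilon}=\{\pm u\geq\varepsilon\}$, on whose boundaries $\Gamma_{\pm\varepsilon}$ the restriction is $\rmd\beta/\varepsilon\pm\omega$, verifies $(\mathrm{w}_1)$ there because the $\rmd\beta/\varepsilon$ term dominates for small $\varepsilon$, and identifies $(\Gamma_{\pm\varepsilon},\ker\beta_{\pm\varepsilon})$ with $(\Gamma,\ker\beta_0)$ by Gray stability; Remark~\ref{rem:interior} states this explicitly. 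You instead cap $1/u$ off at a large constant $K$ so as to produce a smooth symplectic form on $B_+$ itself with boundary contact form $\beta_0$ on the nose. Your large-$K$ dominance argument on $T\Gamma$ is the exact mirror of the paper's small-$\varepsilon$ argument (with $K\sim 1/\varepsilon$), and your route is closer in spirit to Lemma~\ref{lem:collar} and to the ideal Liouville domain picture of Section~\ref{section:Liouville}. Both approaches are legitimate; yours buys a form defined up to the boundary at the cost of an interpolation estimate, the paper's avoids that estimate at the cost of working with a diffeomorphic copy of $B_{\pm}$.

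That interpolation estimate is the one real gap: you flag the symplecticity of $\omega_+$ in the collar as an anticipated difficulty but do not close it, and it is not automatic. Concretely, $\omega_+^n=(\omega+g\,\rmd\beta)^n+n\,g'\,\rmd u\wedge\beta\wedge(\omega+g\,\rmd\beta)^{n-1}$; the good summand is $n\,g'g^{n-1}\,\rmd u\wedge\beta\wedge(\rmd\beta)^{n-1}$, positive because $g'<0$ and $-\rmd u\wedge\beta\wedge(\rmd\beta)^{n-1}>0$ near $\Gamma$ by Lemma~\ref{lem:dividing}, and it dominates the remaining summands (of order $g^n$ and $|g'|g^k$ with $k<n-1$) precisely when $g\gg 1$ and $|g'|\gg g$ on the region where $g$ deviates from $1/u$. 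Both constraints are achievable: $g(\varepsilon/2)=2/\varepsilon$ is already large for small $\varepsilon$, and a profile such as $g(u)=(2/\varepsilon)\,\rme^{C(\varepsilon/2-u)}$ with $C$ large, suitably smoothed where it meets $1/u$, keeps $g(0)=K$ finite while satisfying $|g'|=Cg$. This is the same $b'\gg\max\{1,b,|c'|\}$ estimate as in Lemma~\ref{lem:collar}, so the gap is fillable, but it must actually be filled. Note also that your proposed fallback --- achieve only $(\mathrm{w}_2)$ and then invoke Proposition~\ref{prop:weak} --- does not bypass the problem, since Lemma~\ref{lem:collar} presupposes a symplectic form already defined up to the boundary, which is exactly what is at issue here; the fallback that does work is the paper's, namely restricting to $\{\pm u\geq\varepsilon\}$ instead of extending.
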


\begin{rem}
\label{rem:interior}
As we shall see presently,
an $S^1$-invariant contact form $\alpha$ on $M$
gives rise in a natural way to symplectic forms $\omega_{\pm}$ on
the \emph{interior} of $\pm B_{\pm}$. The symplectic forms
in the statement of the proposition are obtained by considering
a diffeomorphic copy of $\pm B_{\pm}$ obtained by shrinking
a collar neighbourhood of its boundary. Conversely,
in the proof of Theorem~\ref{thm:existence}, where
we construct an $S^1$-invariant contact form on $M$ from
symplectic forms on $\pm B_{\pm}$, we actually
insert a `neck' $[-1,1]\times\Gamma$ between $B_+$ and $B_-$
and find a contact structure on the corresponding $S^1$-bundle over
this enlarged base manifold. The approach 
in Section~\ref{section:Liouville} via so-called ideal Liouville
domains circumvents these subtleties.
\end{rem}

\begin{proof}[Proof of Proposition~\ref{prop:decomposition}]
On $B\setminus\Gamma$ one can write the volume form $\Omega$
from Lemma~\ref{lem:volume} as
\[ \Omega =u^{n+1}\bigl(\rmd (\beta/u)+\omega\bigr)^n,\]
so
\[ \omega_{\pm}:= \pm\bigl( \rmd (\beta/u)+\omega\bigr)|_{\Int(B_{\pm})} \]
are symplectic forms on $\Int(B_{\pm})$, respectively,
inducing the positive orientation on $\Int(B_+)$
and the negative orientation on~$\Int(B_-)$.
Moreover, we have
\[ \mp [\omega_{\pm}/2\pi]=-[\omega/2\pi]|_{\Int(B_{\pm})}
=e|_{\Int(B_{\pm})}.\]

If $\Gamma\neq\emptyset$, we may choose $\varepsilon>0$ so small that
\[ B_{\pm}^{\varepsilon}:=\{ p\in B\co \pm u(p)\geq\varepsilon\} \]
is an isotopic copy of $B_{\pm}$ in~$B$, and such that
for each $s\in[-\varepsilon,\varepsilon]$ the set
\[ \Gamma_s:=\{ p\in B\co u(p)=s\} \]
is an isotopic copy of $\Gamma$ in $B$ with $\beta_s:=\beta|_{T\Gamma_s}$
a contact form. In other words, $B_{\pm}^{\varepsilon}$ is obtained
from $B_{\pm}$ by shrinking a collar neighbourhood of its
boundary. By Gray stability \cite[Theorem~2.2.2]{geig08},
the contact manifolds $(\Gamma,\ker\beta_0)$
and $(\Gamma_{\pm\varepsilon},\ker\beta_{\pm\varepsilon})$ are diffeomorphic.

On $\Gamma_{\varepsilon}$ we have for all $k=0,\dots ,n-1$,
possibly after choosing a smaller $\varepsilon>0$,
\[ \beta_{\varepsilon}\wedge (\rmd\beta_{\varepsilon})^k\wedge
\omega_+^{n-1-k}|_{T\Gamma_{\varepsilon}}=
\beta\wedge (\rmd\beta)^k\wedge
(\rmd\beta/\varepsilon +\omega)^{n-1-k}|_{T\Gamma_{\varepsilon}} >0,\]
so $(B_+^{\varepsilon},\omega_+|_{B_+^{\varepsilon}})$
is a weak filling of $(\Gamma_{\varepsilon},\ker\beta_{\varepsilon})
\cong (\Gamma,\ker\beta_0)$. Condition (i) holds under the obvious
diffeomorphism between $B_+^{\varepsilon}$ and~$B_+$.

Similarly, on $\Gamma_{-\varepsilon}$ we have
\[ \beta_{-\varepsilon}\wedge (\rmd\beta_{-\varepsilon})^k\wedge
\omega_-^{n-1-k}|_{T\Gamma_{-\varepsilon}}=
\beta\wedge (\rmd\beta)^k\wedge
(\rmd\beta/\varepsilon -\omega)^{n-1-k}|_{T\Gamma_{-\varepsilon}} >0,\]
so $(-B_-^{\varepsilon},\omega_-|_{B_-^{\varepsilon}})$
is a weak filling of $(\Gamma_{-\varepsilon},\ker\beta_{-\varepsilon})
\cong (\Gamma,\ker\beta_0)$ in the sense of~$(\mathrm{w}_1)$.
\end{proof}
\section{Constructing an invariant contact structure}
We are now going to show that the conditions listed
in Proposition~\ref{prop:decomposition} are in fact also sufficient
for the existence of an $S^1$-invariant contact structure on~$M$.

\begin{thm}
\label{thm:existence}
Let $\pi\co M\rightarrow B$ be a principal $S^1$-bundle of
Euler class~$e$ over a closed, connected, oriented
manifold $B$ of dimension~$2n$. Suppose that $B$ admits a splitting
$B=\nobreak{B_+\cup_{\Gamma}B_-}$ along a (possibly empty)
codimension~$1$ submanifold\/ $\Gamma$ such that there are
symplectic forms $\omega_{\pm}$ on $\pm B_{\pm}$ and
a cooriented contact structure $\ker\beta$ on $\Gamma$
satisfying conditions {\rm (i)} and {\rm (ii)}
of Proposition~\ref{prop:decomposition}. Then $M$ admits
an $S^1$-invariant contact structure with dividing set~$\Gamma$.
\end{thm}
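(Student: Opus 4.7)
The goal is to construct an $S^1$-invariant contact form on $M$ of the shape $\alpha=\beta+u\psi$, where $\psi$ is a connection $1$-form on $M$, $u$ is an $S^1$-invariant function descending to $B$ with $u^{-1}(0)=\Gamma$ and $\pm u>0$ on $\Int(B_{\pm})$, and $\beta$ is an $S^1$-invariant horizontal $1$-form. By Lemma~\ref{lem:volume}, this $\alpha$ is a contact form if and only if the $2n$-form $\Omega$ on $B$ is everywhere positive, so the task reduces to producing a triple $(\psi,u,\beta)$ with $\Omega>0$.

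Following Remark~\ref{rem:interior}, I would first enlarge $B$ by inserting a collar neck $[-\varepsilon,\varepsilon]\times\Gamma$ between $B_+$ and $B_-$, forming a base $\tilde B$ diffeomorphic to $B$ and an $S^1$-bundle $\tilde M\to\tilde B$ isomorphic to $M$. On the neck, set $u(t,x)=t$, let $\beta$ be the pull-back of $\beta_0$ from $\Gamma$, and use a connection pulled back from a chosen connection on $M|_\Gamma$. At $t=0$ one finds $\Omega=n\,\rmd t\wedge\beta_0\wedge(\rmd\beta_0)^{n-1}$, which is positive because $\beta_0$ is contact on $\Gamma$; by compactness of $\Gamma$ and continuity, taking $\varepsilon>0$ sufficiently small yields $\Omega>0$ throughout the neck.

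On $\tilde B_+$, a diffeomorphic copy of $B_+$, I would choose a connection $1$-form $\psi_+$ with curvature $\omega_+$ (available by condition~(i)), adjusted by a horizontal $1$-form so as to agree with the neck connection near $\partial\tilde B_+$. Deep inside $\tilde B_+$ take the Boothby--Wang-type form $\alpha=\psi_+$, for which $\alpha\wedge(\rmd\alpha)^n=\psi_+\wedge\omega_+^n>0$. To bridge this with the neck form $\beta_0+\varepsilon\psi$ in a collar $[0,1]_s\times\Gamma$ of $\partial\tilde B_+$, set
\[
\alpha=h(s)\beta_0+u(s)\psi_+,
\]
with $(h,u)$ a smooth monotone path from $(1,\varepsilon)$ at $s=0$ (agreeing with the neck) to $(0,1)$ at $s=1$ (agreeing with the Boothby--Wang form). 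A mirror construction on $\tilde B_-$ (using $-\omega_-$ in place of $\omega_+$ and $u<0$) completes an $S^1$-invariant contact form on $\tilde M$, which the diffeomorphism $\tilde M\cong M$ returns to $M$ with dividing set~$\Gamma$.

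The main obstacle is verifying $\Omega>0$ throughout the two interpolation collars. Substituting $\beta=h(s)\beta_0$ and $u=u(s)$ into the formula for $\Omega$ and decomposing $\omega_+=\rmd s\wedge\mu+\nu$ in the collar produces a sum of terms whose coefficients are polynomials in $h,h',u,u'$ multiplying wedges $\rmd s\wedge\beta_0\wedge(\rmd\beta_0)^k\wedge\omega_+^{n-1-k}$ (and controlled $\mu$-corrections). The weak filling hypothesis~(ii) in the formulation $(\mathrm{w}_1)$ says precisely that $(\rmd\beta_0)^k\wedge\omega_+^{n-1-k}|_{\ker\beta_0}>0$ for each $k$, so the main contributions to $\Omega$ have a definite positive sign. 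Choosing $h$ to decrease and $u$ to increase slowly enough over the collar makes these positive contributions dominate any sign-indefinite terms, yielding $\Omega>0$. This step is the computational converse to the derivation of $\omega_\pm$ in the proof of Proposition~\ref{prop:decomposition} and parallels the collar modification of Lemma~\ref{lem:collar}.
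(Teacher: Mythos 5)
Your overall strategy (insert a neck $[-\varepsilon,\varepsilon]\times\Gamma$, use Boothby--Wang forms $\pm\psi_\pm$ in the interior of $\pm B_\pm$, and interpolate in a collar) is the same as the paper's, but two steps in the interpolation do not work as written. First, you cannot adjust $\psi_+$ by a horizontal $1$-form so that it agrees with the neck connection near $\partial\tilde B_+$: the neck connection is pulled back from $M|_\Gamma$, so its curvature is a degenerate $2$-form $\nu_1$ lifted from $\Gamma$, whereas $\rmd\psi_+=\pi^*\omega_+$ with $\omega_+$ symplectic. Two connection forms can only agree on an open set if their curvatures do, and adding any horizontal $1$-form $\gamma$ changes the curvature by $\rmd\gamma$, so after the adjustment $\psi_+$ no longer has curvature $\omega_+$ and the Boothby--Wang positivity in the interior is lost. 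The discrepancy $\omega_+-\nu_1$ has to be absorbed into the $\beta$-part of the contact form, and this is exactly what the paper's Lemma~\ref{lem:collar} arranges by first deforming $\omega_+$ near the boundary into the normal form $\omega_+^\Gamma+\rmd(\rme^s\beta)$, after which $\psi_+=\psi_+^\Gamma+\rme^{t+1}\beta$ matches the neck data exactly. Your proposal invokes that lemma only as a ``parallel'' rather than as a necessary preliminary.

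Second, the positivity argument in the transition collar is backwards. Writing $\omega_+=\rmd s\wedge\mu+\nu$ and $\alpha=h\beta_0+u\psi_+$, the top form is (up to positive factors)
\[
n\,\rmd s\wedge\psi_+\wedge\bigl[(hu'-uh')\beta_0+u^2\mu\bigr]\wedge(h\,\rmd\beta_0+u\nu)^{n-1},
\]
where the $\beta_0$-term is positive by $(\mathrm{w}_1)$ but the terms $\mu\wedge(\rmd\beta_0)^k\wedge\nu^{n-1-k}$ with $k\ge 1$ carry no sign. These sign-indefinite terms are independent of $h',u'$, so making $h$ and $u$ vary \emph{slowly} shrinks the good coefficient $hu'-uh'$ and destroys the domination; what is actually needed is that $hu'-uh'$ be \emph{large} (compare the paper's requirements $f'g-fg'>0$, $f\gg 1$ and $f'g-fg'\gg 1$ where $g'\neq 0$), and the whole issue disappears once $\mu$ has been normalised to $\rme^s\beta$ via Lemma~\ref{lem:collar}, since then $\mu\wedge(\cdots)$ is itself positive by $(\mathrm{w}_1)$. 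Note also that condition (ii) gives positivity of $(\rmd\beta_0)^k\wedge\nu^{n-1-k}|_{\ker\beta_0}$ only on $\Gamma$ itself, hence only on a small collar by openness, which further constrains where your interpolation can take place. As it stands, the proof has a genuine gap at the junction between the filling pieces and the neck.
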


\begin{proof}[Proof of Theorem \ref{thm:existence} for $\Gamma=\emptyset$]
If $\omega_+$ is a symplectic form on $B$ with $-[\omega_+/2\pi]=e$,
take $\alpha$ to be a connection $1$-form $\psi$
with curvature form~$\omega_+$.
Then we have $\alpha\wedge (\rmd\alpha )^n=\psi\wedge\pi^*\omega_+^n>0$.
If $-B$ admits a symplectic form $\omega_-$ with $[\omega_-/2\pi]=e$,
set $\alpha=-\psi$, where $\psi$ is a connection $1$-form with
curvature form $-\omega_-$. Then $\alpha\wedge (\rmd\alpha )^n=
-\psi\wedge\pi^*\omega_-^n>0$.
\end{proof}

From now on it will be assumed that $B$ decomposes as
$B=B_+\cup_{\Gamma}B_-$ with $\Gamma\neq\emptyset$.
We begin by considering $B_+$ and $B_-$ separately. Our first aim
is to modify $\omega_{\pm}$ in a neighbourhood of the boundary
such that the new symplectic manifolds resemble strong fillings
of $(\Gamma,\ker\beta)$. The next lemma mildly generalises an
idea of Eliashberg~\cite{elia91}, cf.~\cite{geig06}.

\begin{lem}
\label{lem:collar}
The symplectic forms $\omega_{\pm}$ can be modified in
a collar neighbourhood of $\Gamma=\partial (\pm B_{\pm})$
in $\pm B_{\pm}$ such that in a smaller collar
neighbourhood $(-\varepsilon ,0]\times\Gamma$ we can write
\[ \omega_{\pm}=\pm \omega_{\pm}^{\Gamma}+\rmd (\rme^s\beta),\]
possibly after replacing $\beta$ by $K\beta$ for some
large $K\in\R^+$. Here $\omega_{\pm}^{\Gamma}$ are
$2$-forms on~$\Gamma$, pulled back to $(-\varepsilon ,0]\times\Gamma$
under the projection map to~$\Gamma$.
\end{lem}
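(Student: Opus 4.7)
The plan is to localize the problem to a collar neighbourhood of $\Gamma$ in $\pm B_\pm$, express $\omega_\pm$ as $\pm\omega_\pm^\Gamma + \rmd\eta$ using a suitable primitive, and then homotope $\eta$ to the Liouville form $\rme^{s}K\beta$ via a smooth cut-off, choosing the constant $K$ large enough that the weak filling condition $(\mathrm{w}_1)$ forces the interpolated form to remain symplectic.

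Concretely, fix a collar embedding $(-\delta,0]\times\Gamma\hookrightarrow \pm B_\pm$ with coordinate~$s$, and decompose $\omega_\pm = \omega_\pm^s + \rmd s\wedge \gamma_s$ into an $s$-family of $2$-forms and $1$-forms on $\Gamma$. Closedness of $\omega_\pm$ gives $\partial_s \omega_\pm^s = \rmd_\Gamma \gamma_s$, so with $\omega_\pm^\Gamma := \pm \omega_\pm^{0}$ (the boundary value, pulled back) and $\eta(s) := \int_0^s \gamma_{s'}\,\rmd s'$ one obtains $\omega_\pm = \pm\omega_\pm^\Gamma + \rmd\eta$ on the collar, with $\eta|_{s=0}=0$. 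Pick a smooth non-decreasing cut-off $\chi\co (-\delta,0]\to[0,1]$ with $\chi\equiv 0$ near $s=-\delta$ and $\chi\equiv 1$ on $[-\varepsilon,0]$, and set
\[ \tilde\eta := (1-\chi)\eta + \chi\,\rme^{s} K\beta,\qquad \tilde\omega_\pm := \pm\omega_\pm^\Gamma + \rmd\tilde\eta. \]
This closed $2$-form agrees with $\omega_\pm$ for $s$ near $-\delta$ (and is extended by $\omega_\pm$ off the collar) and with $\pm\omega_\pm^\Gamma + \rmd(\rme^{s} K\beta)$ on the smaller collar $(-\varepsilon,0]\times\Gamma$.

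The heart of the argument is verifying that $\tilde\omega_\pm^{n} > 0$ throughout the transition region. Splitting $\tilde\omega_\pm = A + \rmd s\wedge B$ with $A$ a $2$-form and $B$ a $1$-form on $\Gamma$ (both $s$-dependent), one has $\tilde\omega_\pm^{n} = n\,\rmd s\wedge A^{n-1}\wedge B$ since $\dim\Gamma = 2n-1$. A direct expansion exhibits $A^{n-1}\wedge B$ as a polynomial in $K$ whose leading $K^{n}$-coefficient is proportional to $\chi^{n-1}(\chi+\chi')\,\rme^{ns}(\rmd\beta)^{n-1}\wedge\beta$, non-negative by the contact condition and the monotonicity of $\chi$, and whose sub-leading coefficients consist of positive multiples of the expressions $(\omega_\pm^\Gamma)^{n-1-k}\wedge(\rmd\beta)^{k}\wedge\beta$ (each positive by $(\mathrm{w}_1)$) together with bounded error terms of order $\eta$ and $\omega_\pm^s - \omega_\pm^{0}$, which vanish as $\delta\to 0$.

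The main obstacle is that the leading $K^{n}$-coefficient degenerates at points where $\chi=0$, ruling out a uniform ``let $K\to\infty$'' argument. The standard remedy is a two-regime compactness argument on the transition region: where $\chi$ is bounded below by a positive constant the $K^{n}$-term dominates once $K$ is sufficiently large, while where $\chi$ is small the modified $\tilde\omega_\pm$ is an arbitrarily small perturbation of $\omega_\pm$ itself and so symplecticity is inherited. Choosing $\delta$ small enough that all estimates stabilise and then $K$ correspondingly large yields $\tilde\omega_\pm^{n}>0$ throughout; renaming the scaled contact form $K\beta$ as $\beta$ finally puts the collar expression into the form stated.
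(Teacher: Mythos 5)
Your overall strategy is the same as the paper's: work in a collar, write $\omega_{\pm}=\pm\omega_{\pm}^{\Gamma}+\rmd\eta$ for a primitive $\eta$ of the difference, interpolate the primitive towards $\rme^sK\beta$, and use the weak filling condition to control the non-degeneracy of the result. The set-up, the leading $K^n$-coefficient, and the final reparametrisation are all fine. The gap is in the verification that $\tilde\omega_{\pm}^n>0$ across the transition region. Writing $B=-\chi'\eta+(1-\chi)\gamma_s+K(\chi+\chi')\rme^s\beta$, where $\gamma_s=i_{\partial_s}\omega_{\pm}$ is the normal component of the \emph{original} symplectic form, the expansion of $A^{n-1}\wedge B$ contains the unsigned terms
\[ (K\chi\rme^s)^k\,(\rmd\beta)^k\wedge(\omega_{\pm}^{\Gamma})^{n-1-k}\wedge(1-\chi)\gamma_s,\qquad k\geq 1, \]
which are \emph{not} of order $\eta$ or $\omega_{\pm}^s-\omega_{\pm}^0$ and do not vanish as $\delta\to 0$; your accounting of the sub-leading coefficients omits them. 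Your two-regime argument does not control them either: where $\chi\geq c_1>0$ the $K^n$-term does dominate, and where $K\chi$ is small the form is close to $\omega_{\pm}$, but these regimes do not overlap. In the intermediate band where $a:=K\chi\rme^s$ is of order one (which $a$ must traverse, since it runs continuously from $0$ to roughly $K$), the unsigned terms above are of size $a^k\cdot O(1)$, the guaranteed positive terms $a^k\cdot K(\chi+\chi')\rme^s\cdot(\rmd\beta)^k\wedge(\omega_{\pm}^{\Gamma})^{n-1-k}\wedge\beta$ are only of size $a^{k+1}$ if all you assume is $\chi'\geq 0$, and there is no remaining parameter to force positivity. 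Note also that in this band $\tilde\omega_{\pm}-\omega_{\pm}$ contains $K\chi\rme^s\,\rmd\beta$, which is of order one, so symplecticity is not ``inherited'' by perturbation.

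What closes the gap — and what the paper's proof does — is to make the \emph{derivative} of the interpolating function dominate everything else in the transition zone, i.e.\ to demand something like $\chi'\gg\chi$ there (an exponential-type ramp), so that the positive term $\rmd s\wedge K\chi'\rme^s\beta\wedge A^{n-1}$ swamps the unsigned $\gamma_s$-terms at every order in $a$; and, separately, to switch the new $\beta$-term on \emph{while it is still a genuinely small perturbation} of $\omega_{\pm}$, before switching the old primitive off. Concretely, the paper uses two staged functions: $c$ (your $1-\chi$ applied to $\eta$) is kept equal to $1$ on an initial stretch where the coefficient $b$ of $\beta$ is still small — there $\tilde\omega_{\pm}$ really is a small perturbation of $\omega_{\pm}$, since $b'\geq 0$ only adds a term of the correct sign — and $c$ is turned off only in the region where $b'$ has been made large compared with $\max\{1,b,|c'|\}$, so that $n\,\rmd s\wedge b'\beta\wedge(\,\cdot\,)^{n-1}$ dominates all remaining terms. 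If you replace your single monotone cut-off by this two-stage choice (or add the hypothesis $\chi'\geq N\chi$ on $\{0<\chi<1\}$ for $N$ large, and decouple the switching-off of $\eta$ from the switching-on of $\beta$), your argument goes through; as written, the non-degeneracy claim in the intermediate regime is unjustified.
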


\begin{rem}
\label{rem:weak}
For the proof we shall only assume that $(\pm B_{\pm},\omega_{\pm})$
are weak fillings in the sense of~$(\mathrm{w}_2)$.
After the modification described in the lemma, and possibly
after extending the new $\omega_{\pm}$ in the obvious way
over an attached collar $[0,R]\times\Gamma$ for some large $R>0$,
we obviously have a weak filling in the sense of~$(\mathrm{w}_1)$.
This proves Proposition~\ref{prop:weak}.
\end{rem}

\begin{proof}[Proof of Lemma~\ref{lem:collar}]
For ease of notation we first consider $(B_+,\omega_+)$.
Consider a tubular neighbourhood
$[0,1]\times\Gamma$ of the boundary, where $\{ 1\}\times\Gamma\equiv
\Gamma=\partial B_+$. Define the $2$-form $\omega_+^{\Gamma}$
on $[0,1]\times\Gamma$ by first restricting
$\omega_+$ to $T(\{ 0\}\times\Gamma)$ (i.e.\
pulling back under the inclusion $\{ 0\}\times\Gamma\subset [0,1]
\times\Gamma$) and then pulling back again to $[0,1]\times\Gamma$.
Then both forms $\omega_+$ and $\omega_+^{\Gamma}$ represent the
cohomology class $-2\pi e|_{[0,1]\times\Gamma}$, so there is a $1$-form
$\gamma$ on $[0,1]\times\Gamma$ such that
\[ \omega_+=\omega_+^{\Gamma}+\rmd\gamma.\]

We continue to write $\beta$ for the $1$-form on
$[0,1]\times\Gamma$ obtained by pulling back the original $\beta$
from $\Gamma=\{ 1\}\times\Gamma$.
Since $(B_+,\omega_+)$ is a weak filling of $(\Gamma,\ker\beta)$,
we may assume that the collar $[0,1]\times\Gamma$ had been chosen
so small that condition $(\mathrm{w}_2)$ is satisfied on
the tangent bundle $T(\{ t\}\times\Gamma)$ for each $t\in [0,1]$,
and for any convex linear combination of $\omega_+^{\Gamma}$
and $\omega_+$. In other words, for any $c_t\in [0,1]$
and function $b_t\co \{ t\}\times\Gamma\rightarrow\R_0^+$
we have
\[ \beta\wedge (b_t\,\rmd\beta +\omega_+^{\Gamma}+
c_t\,\rmd\gamma)^{n-1} >0\;\;\mbox{\rm on}\;\; T(\{ t\}\times\Gamma).\]

In the sequel we shall take $b_t$ to be constant
on $\{ t\}\times\Gamma$, so we regard both $t\mapsto b_t$
and $t\mapsto c_t$ as real-valued functions on $[0,1]$.
Set
\[ \tilde{\omega}_+=\omega_+^{\Gamma}+\rmd (c\gamma)+\rmd (b\beta)\]
on $[0,1]\times\Gamma$, where the smooth functions
$b$ and $c$ on $[0,1]$, are
chosen as follows. Fix a small $\varepsilon >0$.
Choose $b\co [0,1]\rightarrow\R^+_0$
monotonically increasing, identically $0$ near $t=0$ and with $b'(t)>0$
for $t>\varepsilon /2$.
Choose $c\co
[0,1]\rightarrow [0,1]$ identically $1$ on $[0,\varepsilon ]$ and
identically $0$ near~$t=1$.

We compute
\[ \tilde{\omega}_+^n=\bigl( n\,\rmd t\wedge (b'\beta+c'\gamma)+
(b\,\rmd\beta+\omega_+^{\Gamma}+c\,\rmd\gamma)\bigr)\wedge
(b\,\rmd\beta+\omega_+^{\Gamma}+c\,\rmd\gamma)^{n-1}.\]

By our choices, the $2n$-form
\[ \rmd t\wedge\beta\wedge(b\,\rmd\beta+\omega_+^{\Gamma}+
c\,\rmd\gamma)^{n-1} \]
is a volume form on $[0,1]\times\Gamma$, so is
\[ (\omega_+^{\Gamma}+c\,\rmd\gamma)^n\]
on $[0,\varepsilon]\times\Gamma$,
where $c=1$, since $\omega_+^{\Gamma}+\rmd\gamma=\omega_+$.
Thus, by choosing $b$ small on $[0,\varepsilon]$ and
$b'$ large compared with $\max\{1,b,|c'|\}$
on $[\varepsilon ,1]$ (up to some multiplicative constants
involving the norms of the differential forms in the
above expression), one can ensure that $\tilde{\omega}_+^n>0$.
Then $\tilde{\omega}_+$ is a symplectic form on $[0,1]\times\Gamma$
and, in terms of the coordinate $s:=\log b(t)-\log b(1)$, this symplectic
form looks like $\omega_+^{\Gamma}+\rmd (\rme^sb(1)\beta)$ near
$\{1\}\times\Gamma$.

For $(-B_-,\omega_-)$ the argument is completely analogous,
except that we take $\omega_-^{\Gamma}$ to be the
restriction of $-\omega_-$ to $T(\{ 0\}\times\Gamma)$.
The value $b(1)$ may be chosen the same for $\omega_+$
and~$\omega_-$.
\end{proof}

\begin{rem}
Our choice of sign in the preceding lemma implies that
when we regard the $2$-forms $\omega_{\pm}^{\Gamma}$
as forms on~$\Gamma$, we have $-[\omega_{\pm}^{\Gamma}/2\pi]
=e|_{\Gamma}$, so both forms $\omega_{\pm}^{\Gamma}$
are curvature forms for the restriction of
the $S^1$-bundle to~$\Gamma$.
\end{rem}

The following is a generalisation of the argument used for
proving \cite[Theorem~1]{gest10}.

\begin{proof}[Proof of Theorem~\ref{thm:existence} for $\Gamma\neq\emptyset$]
By Lemma~\ref{lem:collar} we find a collar neighbourhood
\[ (-1-\varepsilon,-1]\times\Gamma \]
of $\{-1\}\times\Gamma\equiv\Gamma=
\partial B_+$ in $B_+$ where
\[ \omega_+=\omega_+^{\Gamma}+\rmd (\rme^{t+1}\beta);\]
the shift in the collar parameter is made for notational convenience below.
Likewise, we have a collar neighbourhood
\[ [1,1+\varepsilon)\times\Gamma \]
of $\{1\}\times\Gamma\equiv\Gamma=
\partial (-B_-)$ in $B_-$ where
\[ \omega_-=-\omega_-^{\Gamma}+\rmd (\rme^{-t+1}\beta);\]

Write the base $B$ of the $S^1$-bundle as
\[ B_+\cup_\Gamma([-1,1]\times\Gamma)\cup_{\Gamma}B_-.\]
Let $\psi_{\pm}$ be connection $1$-forms of the restriction of
the $S^1$-bundle to $B_{\pm}$ with curvature forms
$\pm\omega_{\pm}$. Let $\psi_{\pm}^{\Gamma}$ be
connection $1$-forms of the $S^1$-bundle over $\Gamma$
with curvature form $\omega_{\pm}^{\Gamma}$.

\begin{lem}
The choices in the preceding argument can be made in such a way that over
the two collars $(-1-\varepsilon,-1]\times\Gamma$
and $[1,1+\varepsilon)\times\Gamma$ we have
\[ \psi_{\pm}=\psi_{\pm}^{\Gamma}\pm\rme^{\pm t+1}\beta,\]
respectively, perhaps at the cost of taking a slightly
smaller $\varepsilon>0$.
\end{lem}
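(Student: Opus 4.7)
The plan is to judiciously choose $\psi_+^\Gamma$ and then modify $\psi_+$ by a basic exact $1$-form to achieve the required equality on a (slightly smaller) collar, without changing the curvature. We describe the argument for $\psi_+$; the $\psi_-$ case is entirely analogous after the obvious sign changes.

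View $\psi_+^\Gamma$, $\beta$, and $\omega_+^\Gamma$ on the collar $(-1-\varepsilon,-1]\times\Gamma$ as pulled back from the slice $\{-1\}\times\Gamma\equiv\Gamma$ via the collar projection. The collar formula for $\omega_+$ given at the start of the proof immediately implies that
\[ \tilde\psi_+ := \psi_+^\Gamma + \rme^{t+1}\beta \]
is a connection $1$-form on the bundle over the collar, with curvature $\omega_+$. Hence $\delta_+ := \psi_+ - \tilde\psi_+$ is closed, $S^1$-invariant and horizontal, so it descends to a closed $1$-form $\bar\delta_+$ on the collar. To make this form exact, we exploit the freedom in choosing $\psi_+^\Gamma$: the restriction $\psi_+|_\Gamma$ is itself a connection form on the bundle over $\Gamma$, with curvature $\omega_+|_{T\Gamma}=\omega_+^\Gamma+\rmd\beta$. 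So the choice
\[ \psi_+^\Gamma := \psi_+|_\Gamma - \beta \]
is valid (its curvature is $\omega_+^\Gamma$) and forces $\tilde\psi_+=\psi_+$ along $\{-1\}\times\Gamma$. Consequently $\bar\delta_+$ vanishes on the boundary slice, and the collar deformation retract onto that slice then yields $\bar\delta_+=\rmd f$ for some smooth function $f$ on the collar.

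Finally, pick a cutoff $\rho\co B_+\to[0,1]$ equal to $1$ on a smaller collar $[-1-\varepsilon/2,-1]\times\Gamma$ and supported in the original collar, and extend $\rho f$ by zero to all of $B_+$. The replacement $\psi_+\mapsto \psi_+-\pi^*\rmd(\rho f)$ is still a connection $1$-form with curvature $\omega_+$ (a basic exact correction is added), and on the smaller collar it coincides with $\tilde\psi_+=\psi_+^\Gamma+\rme^{t+1}\beta$, as required after shrinking $\varepsilon$. The main subtlety is securing the exactness of $\bar\delta_+$, which is arranged by the specific choice of $\psi_+^\Gamma$ that forces vanishing along the boundary slice.
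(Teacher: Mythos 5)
Your proof is correct and follows essentially the same route as the paper: identify the discrepancy $\psi_+-(\psi_+^{\Gamma}+\rme^{t+1}\beta)$ as a closed horizontal $1$-form, use the freedom in choosing $\psi_+^{\Gamma}$ to make it exact, and then cut off the primitive so that the modification of $\psi_+$ is supported in the collar. The only (harmless) variation is that you fix $\psi_+^{\Gamma}:=\psi_+|_{\Gamma}-\beta$ at the outset to kill the cohomology class of the discrepancy, whereas the paper keeps $\psi_+^{\Gamma}$ arbitrary and afterwards absorbs a closed representative $\gamma^{\Gamma}$ of that class into it.
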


\begin{proof}
We only deal with $\psi_+$; the argument for $\psi_-$ is completely
analogous. Over the collar $(-1-\varepsilon,-1]\times\Gamma$ the
connection forms $\psi_+$ and $\psi_+^{\Gamma}+\rme^{t+1}\beta$ have the same
curvature form~$\omega_+$. It follows that
\[ \psi_+=\psi_+^{\Gamma} +\rme^{t+1}\beta+\gamma \]
with $\gamma$ a closed horizontal $1$-form.
Choose a closed $1$-form $\gamma^{\Gamma}$ on~$\Gamma$, which we also
interpret as a $1$-form on $(-1-\varepsilon,-1]\times\Gamma$,
representing the same class as $\gamma$ in $H^1_{\dR}
((-1-\varepsilon,-1]\times\Gamma)$. Then we can write
\[ \gamma =\gamma^{\Gamma}+\rmd h \]
for some smooth function on $(-1-\varepsilon,-1]\times\Gamma$.
Replace $\psi_+^{\Gamma}$ by $\psi_+^{\Gamma}+\gamma^{\Gamma}$, and
$\psi_+$ by $\psi_+ -\rmd (\chi h)$, where
$\chi\co (-1-\varepsilon,-1]\rightarrow [0,1]$
interpolates smoothly between $0$ near $-1-\varepsilon$ and $1$ near~$-1$.
Then the new $\psi_+$ still extends as before over~$B_+$, and near
$\{ -1\}\times\Gamma$ we have the equality claimed in the lemma.
\end{proof}

We continue with the proof of Theorem~\ref{thm:existence}.
Let $\psi_t^{\Gamma}$, $t\in [-1,1]$, be a smooth family
of connection $1$-forms on the $S^1$-bundle over $\Gamma$ with
$\psi_t^{\Gamma}=\psi_{\pm}^{\Gamma}$ for $t$ near~$\mp 1$.

Now choose two smooth functions $f$ and $g$ on the interval 
$(-1-\varepsilon , 1+\varepsilon )$ subject to the following conditions
(see Figure~\ref{figure:functions}):
\begin{itemize} 
\item $f$ is an even and nowhere zero function with
      $f(t)=\rme^{t+1}$ near $(-1-\varepsilon ,-1]$,
\item $g$ is an odd function with $g(t)=1$ near $(-1-\varepsilon ,-1]$
      and a single zero at~$0$,
\item $f'g-fg'>0$,
\item $f\gg 1$ and $f'g-fg'\gg 1$ where $g'\neq 0$.
\end{itemize}

\begin{figure}[h]
\labellist 
\small\hair 2pt
\pinlabel $t$ [t] at 246 145
\pinlabel $t$ [t] at 605 145
\pinlabel $g(t)$ [r] at 486 282
\pinlabel $f(t)$ [r] at 126 282
\pinlabel $1$ [r] at 117 217
\pinlabel $1$ [r] at 477 217
\pinlabel $-1$ [t] at 54 135
\pinlabel $1$ [t] at 198 135
\pinlabel $-1$ [t] at 414 135
\pinlabel $1$ [t] at 558 135
\endlabellist
\centering
\includegraphics[scale=0.5]{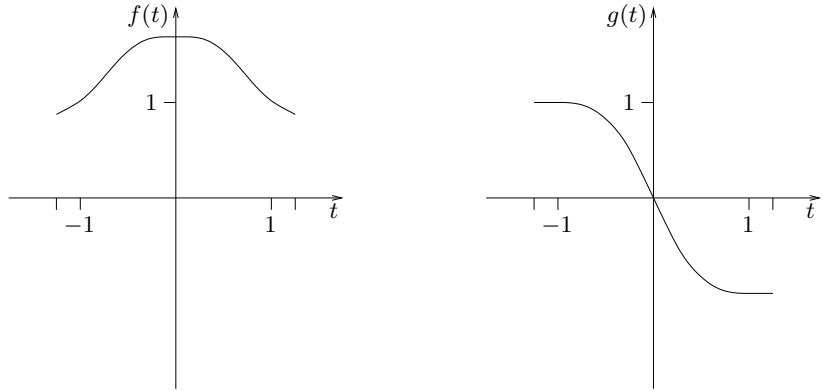}
  \caption{The functions $f$ and $g$.}
  \label{figure:functions}
\end{figure}

Define a smooth $S^1$-invariant $1$-form $\alpha$ on the $S^1$-bundle
over $B$ by
\[ \alpha=\begin{cases}
\psi_+                    & \text{over $B_+$},\\
f\beta+g\psi_t^{\Gamma} & \text{over $[-1,1]\times\Gamma$},\\
-\psi_-                   & \text{over $B_-$}.
\end{cases} \]
Over $B_{\pm}$ this defines a contact form by the same computation as
in the proof for the case $\Gamma=\emptyset$.
Over $[-1,1]\times\Gamma$ we compute
\begin{eqnarray*}
\alpha\wedge (\rmd\alpha)^n & = &
    (f\beta+g\psi_t^{\Gamma})\wedge
    n\, (f\, \rmd\beta+g\, \rmd\psi_t^{\Gamma})^{n-1}
    \wedge \rmd t\\
                            &   &
    \mbox{}\wedge \bigl( f'\beta+g'\psi_t^{\Gamma}+
    g(\partial\psi_t^{\Gamma}/\partial t)\bigr)\\
                            & = &
    n\psi_t^{\Gamma}\wedge \rmd t\wedge
    \bigl((f'g-fg')\beta+g^2(\partial\psi_t^{\Gamma}/\partial t)\bigr)\\
                            &   &
    \mbox{}\wedge (f\, \rmd\beta+g\, \rmd\psi_t^{\Gamma})^{n-1}.
\end{eqnarray*}
Notice that $\partial\psi_t^{\Gamma}/\partial t$ is a horizontal $1$-form,
so the term where we wedge this with $\beta$ rather than
$\psi_t^{\Gamma}$ from the first factor yields a horizontal
$(2n+1)$-form, i.e.\ zero.

Near $t=-1$ we have $g\equiv 1$ and $\psi_t^{\Gamma}\equiv\psi_+^{\Gamma}$,
hence $\rmd\psi_t^{\Gamma}\equiv\omega_+^{\Gamma}$. With condition~(ii)
from Proposition~\ref{prop:decomposition} this implies
\[ \alpha\wedge (\rmd\alpha)^n=
nf'\psi_+^{\Gamma}\wedge \rmd t\wedge\beta\wedge
(f\, \rmd\beta+\omega_+^{\Gamma})^{n-1}>0.\]

Near $t=1$ we have $g\equiv -1$ and $\psi_t^{\Gamma}\equiv\psi_-^{\Gamma}$,
hence $\rmd\psi_t^{\Gamma}\equiv\omega_-^{\Gamma}$. Recall that
$\omega_-^{\Gamma}$ was defined as the restriction of $-\omega_-$,
so we get
\[ \alpha\wedge (\rmd\alpha)^n=
-nf'\psi_-^{\Gamma}\wedge \rmd t\wedge\beta\wedge
(f\, \rmd\beta-\omega_-^{\Gamma})^{n-1}>0.\]

Finally, over the region where $g'\neq 0$, we have $f\gg 1$
and $f'g-fg'\gg 1$.
It follows that the positive summand
\[ nf^{n-1}(f'g-fg')\psi_t^{\Gamma}\wedge \rmd t\wedge\beta\wedge
(\rmd\beta)^{n-1}\]
in the expression for $\alpha\wedge (\rmd\alpha)^n$ will
dominate all other summands.

The dividing set of the contact structure $\ker\alpha$
coincides with the zero set $\{ 0\}\times\Gamma$ of~$g$.
This completes the proof of the theorem.
\end{proof}
\section{Examples}
\subsection{From trivial to nontrivial bundles}
Here is a simple corollary of our main theorem.

\begin{cor}
\label{cor:bundles}
Let $B$ be a closed, oriented manifold of dimension~$2n$.
If the trivial $S^1$-bundle over $B$ admits an $S^1$-invariant
contact structure with dividing set~$\Gamma$, then so do all $S^1$-bundles
over~$B$.
\end{cor}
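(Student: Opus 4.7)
My plan is to bootstrap from the existence criteria already established in the paper: extract a symplectic splitting from the trivial-bundle hypothesis using Proposition~\ref{prop:decomposition}, cohomologically adjust the symplectic forms on $B_\pm$ so that they represent the Euler class of an arbitrary non-trivial bundle, and then feed the adjusted data into Theorem~\ref{thm:existence}.

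First, I would apply Proposition~\ref{prop:decomposition} to the given $S^1$-invariant contact structure on the trivial bundle $B\times S^1$. Since that bundle has zero Euler class, the output is a splitting $B=B_+\cup_\Gamma B_-$ along the dividing set, symplectic forms $\omega_\pm$ on $\pm B_\pm$ with $[\omega_\pm/2\pi]=0$ in $H^2_{\dR}(B_\pm)$, and (if $\Gamma\neq\emptyset$) a contact form $\beta_0$ on $\Gamma$ weakly filled by $(\pm B_\pm,\omega_\pm)$. The case $\Gamma=\emptyset$ will be handled by the same argument, only with no weak-filling step to verify.

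Next, for an arbitrary principal $S^1$-bundle $\pi\co M\to B$ of Euler class~$e$, I would pick any closed $2$-form $\eta$ on $B$ with $[\eta]=-2\pi e$ in $H^2_{\dR}(B)$ and, for a positive constant $\lambda$ to be chosen later, set
\[ \omega_+':=\eta|_{B_+}+\lambda\omega_+\qquad\text{and}\qquad
   \omega_-':=-\eta|_{B_-}+\lambda\omega_-. \]
A direct cohomology computation gives $\mp[\omega_\pm'/2\pi]=e|_{B_\pm}$, so condition~(i) of Proposition~\ref{prop:decomposition} is satisfied for every~$\lambda$.

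The main step, and really the only one that involves any geometric content, is to show that for $\lambda$ sufficiently large the forms $\omega_\pm'$ remain symplectic and continue to weakly fill $(\Gamma,\ker\beta_0)$. Expanding in powers of $\lambda$,
\[ (\omega_\pm')^n=\lambda^n\omega_\pm^n+O(\lambda^{n-1}), \]
\[ (\rmd\beta_0)^k\wedge(\omega_\pm')^{n-1-k}|_{\ker\beta_0}
   =\lambda^{n-1-k}(\rmd\beta_0)^k\wedge\omega_\pm^{n-1-k}|_{\ker\beta_0}
   +O(\lambda^{n-2-k}), \]
each leading coefficient being pointwise positive by the properties of $\omega_\pm$ already delivered by Proposition~\ref{prop:decomposition}. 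By compactness of $B_\pm$ and $\Gamma$ and finiteness of the range of~$k$, a single sufficiently large $\lambda$ makes all of the required positivity conditions hold simultaneously. Feeding $(B_\pm,\omega_\pm',\beta_0)$ into Theorem~\ref{thm:existence} then produces an $S^1$-invariant contact structure on $M$ with dividing set~$\Gamma$. The only potential pitfall is the weak-filling inequalities after the cohomological shift, and this is precisely what the openness-under-perturbation argument above is designed to handle.
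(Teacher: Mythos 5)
Your proposal is correct and follows essentially the same route as the paper: extract the splitting with cohomologically trivial (hence exact) symplectic forms from Proposition~\ref{prop:decomposition} applied to the trivial bundle, add a closed representative of $\mp 2\pi e|_{B_\pm}$ to a large multiple of those forms, and verify that symplecticity and the weak-filling inequalities $(\mathrm{w}_1)$ survive by a leading-order/compactness argument before invoking Theorem~\ref{thm:existence}. The paper's proof is just a more compressed version of this (writing $\omega_\pm=\sigma_\pm+K\,\rmd\lambda_\pm$ and asserting the positivity for large $K$), so your expansion of the positivity check is a faithful filling-in of the same argument.
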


\begin{proof}
If the trivial $S^1$-bundle over $B$ admits an $S^1$-invariant contact
structure with dividing set~$\Gamma$, then by
Proposition~\ref{prop:decomposition} the base $B$ has a splitting
$B=B_+\cup_{\Gamma}B_-$ with a contact structure $\ker\beta$
on $\Gamma$ and exact symplectic forms $\rmd\lambda_{\pm}$
on $\pm B_{\pm}$ such that $(\pm B_{\pm},\rmd\lambda_{\pm})$
are weak fillings of $(\Gamma,\ker\beta)$.

Given an $S^1$-bundle over $B$ of Euler class $e\in H^2_{\dR}(B)$,
choose $2$-forms $\sigma_{\pm}$ on $B_{\pm}$ with
$\mp [\sigma_{\pm}/2\pi]=e|_{B_{\pm}}$. For $K\in\R^+$
sufficiently large, the $2$-forms $\omega_{\pm}:=
\sigma_{\pm}+K\, \rmd\lambda_{\pm}$ are symplectic forms satisfying
conditions (i) and (ii) of Proposition~\ref{prop:decomposition}.
Then the result follows from Theorem~\ref{thm:existence}.
\end{proof}

Using a result of Baykur~\cite{bayk06} on decompositions
of $4$-manifolds, it was shown in \cite[Corollary~2]{gest10} that the trivial
$S^1$-bundle over any closed, oriented $4$-manifold admits
an $S^1$-invariant contact structure. So the next corollary
is immediate. We give a direct proof illuminating the
role of Baykur's result.

\begin{cor}
\label{cor:five}
Any $S^1$-bundle over any closed, oriented $4$-manifold
admits an $S^1$-invariant contact structure.
\end{cor}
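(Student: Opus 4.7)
The plan is to reduce the corollary to Theorem~\ref{thm:existence} by exhibiting, for any closed oriented $4$-manifold $B$ and any prescribed Euler class $e\in H^2_{\dR}(B)$, a splitting of $B$ together with symplectic forms on the two pieces satisfying conditions~(i) and~(ii) of Proposition~\ref{prop:decomposition}. The splitting will be imported from Baykur~\cite{bayk06}; the rest is exactly the weighted-sum trick already used in the proof of Corollary~\ref{cor:bundles}.

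First I would quote Baykur's theorem: every closed, oriented $4$-manifold $B$ admits a decomposition $B=B_+\cup_{\Gamma}B_-$ along a closed $3$-submanifold~$\Gamma$ in such a way that each of $\pm B_\pm$ carries a Stein (in particular, exact symplectic) structure $\rmd\lambda_\pm$, with the two fillings inducing a common convex contact boundary $(\Gamma,\ker\beta)$. Thus $(\pm B_\pm,\rmd\lambda_\pm)$ are strong, and therefore weak, fillings of $(\Gamma,\ker\beta)$.

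Given an $S^1$-bundle over $B$ of Euler class~$e$, I would next choose $2$-forms $\sigma_\pm$ on $B_\pm$ with $\mp[\sigma_\pm/2\pi]=e|_{B_\pm}$ and set
\[ \omega_\pm := \sigma_\pm + K\,\rmd\lambda_\pm \]
for a constant $K\in\R^+$ to be chosen large. Since $(\rmd\lambda_\pm)^2$ is already a positive volume form on $\pm B_\pm$, the term $K^2(\rmd\lambda_\pm)^2$ dominates $\omega_\pm^2$ for $K$ sufficiently large, so $\omega_\pm$ is symplectic. Condition~(i) of Proposition~\ref{prop:decomposition} holds by the choice of~$\sigma_\pm$. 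For condition~(ii), in dimension~$4$ (i.e.\ $n=2$) the weak filling inequalities reduce to $\omega_\pm|_\eta>0$ and $\rmd\beta|_\eta>0$ on $\eta=\ker\beta$; the second holds because $\beta$ is contact, while the first follows from $\rmd\lambda_\pm|_\eta>0$ (a consequence of the Stein filling property) by taking $K$ large enough to swamp the contribution of $\sigma_\pm|_\eta$.

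Applying Theorem~\ref{thm:existence} then produces the desired $S^1$-invariant contact structure on~$M$. The only substantive ingredient is Baykur's decomposition theorem, and that is where all of the genuine $4$-dimensional topology lies; the rest of the argument is a formal rescaling. Consequently I do not expect any serious obstacle beyond citing~\cite{bayk06} and keeping track of signs and orientations.
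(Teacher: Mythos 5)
Your proposal is correct and follows essentially the same route as the paper: it invokes Baykur's decomposition theorem to split $B$ into two Stein (hence exact symplectic) pieces with common convex contact boundary, and then applies the rescaling trick $\omega_\pm=\sigma_\pm+K\,\rmd\lambda_\pm$ from the proof of Corollary~\ref{cor:bundles} before appealing to Theorem~\ref{thm:existence}. Your extra details (verifying $\omega_\pm^2>0$ for large $K$ and unpacking the weak-filling inequalities for $n=2$) are accurate and merely make explicit what the paper leaves implicit.
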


\begin{proof}
According to \cite[Theorem~5.2]{bayk06}, any closed, oriented
$4$-manifold $B$ admits a splitting $B=B_+\cup_{\Gamma} B_-$
with exact symplectic forms $\rmd\lambda_{\pm}$ on $\pm B_{\pm}$
that provide a filling of one and the same contact structure on~$\Gamma$.
In fact, the $(\pm B_{\pm},\rmd\lambda_{\pm})$ can be taken to be
Stein fillings.

Then the argument concludes as in the proof of the preceding
corollary.
\end{proof}

In \cite{gest10} it was also shown that $\CP^2\times S^1$
admits a contact structure in every homotopy class of almost contact
structures (i.e.\ reduction of the structure group to $\mathrm{U}(2)
\times 1$). The same argument as in the proof of Corollary~\ref{cor:bundles}
allows us to extend this to nontrivial bundles: on any given
$S^1$-bundle over $\CP^2$, any homotopy class of
$S^1$-invariant almost contact structures contains a contact structure.
\subsection{It's all in the twist}
\label{subsection:twist}
With the help of an example we want to illustrate
that the topological type of the total space $M$ of an $S^1$-bundle
with an invariant contact structure is not, in general,
determined by the symplectic splitting of the
base~$B$. This happens whenever there are nontrivial cohomology
classes in $H^2_{\dR}(B)$ that restrict to zero on the pieces
$B_{\pm}$ of the splitting, or in the presence of torsion
in $H^2(B;\Z)$.

We take $B=T^2\times S^2$ and split it into $B_{\pm}=T^2\times D^2_{\pm}$.
Write $\theta_1,\theta_2$ for circle coordinates on~$T^2$,
and $x,y$ (resp.\ $r,\varphi$) for Cartesian (resp.\ polar)
coordinates on $\pm D^2_{\pm}$. Then the gluing $D^2_+\cup_{S^1} D^2_-$
is given by $\varphi\mapsto\varphi$.

On $\pm B_{\pm}$ we have the symplectic form
$\rmd x\wedge\rmd\theta_1-\rmd y\wedge\rmd\theta_2$. This admits
the Liouville vector field $x\partial_x+y\partial_y$, and hence
gives a strong filling of $\Gamma=T^3=\partial (\pm B_{\pm})$
with contact structure $\ker(\cos\varphi\,\rmd\theta_1-\sin\varphi\,
\rmd\theta_2)$.

For any $k\in\Z$, the two trivial bundles $B_{\pm}\times S^1_{\theta}$
can be glued using the map
\[ g_k\co (\theta_1,\theta_2,\varphi,\theta)\longmapsto
(\theta_1,\theta_2,\varphi,\theta-k\varphi)\]
on the boundary. The Euler class $e\in H^2_{\dR}(B)$ of the
resulting $S^1$-bundle over $B$ satisfies
$\langle e,[T^2\times *]\rangle=0$ and $\langle e,[*\times S^2]\rangle=k$.
The restriction of $e$ to $B_{\pm}$ is the zero class.

According to Theorem~\ref{thm:existence}, each of these bundles over $B$
carries an $S^1$-invariant contact structure that induces
the same decomposition of $B$ into two exact symplectic pieces
$\pm B_{\pm}$. We shall return to this example
in Section~\ref{section:Liouville}.
\subsection{Open books and the Bourgeois construction}
\label{subsection:Bourgeois}
An {\em open book decomposition\/} of a manifold
$N$ consists of a codimension~$2$ submanifold $B_N$, called the
{\em binding}, and a (smooth, locally
trivial) fibration $\frakp\co N\setminus B_N
\rightarrow S^1$. The closures of the fibres $\frakp^{-1}(\varphi )$,
$\varphi\in S^1$, are called the {\em pages}. Moreover, it is
required that the binding $B_N$ have a trivial tubular neighbourhood
$B_N\times D^2$ in which $\frakp$ is given by the angular coordinate in
the $D^2\!$-factor.

If $N$ and $B_N$ are oriented, we orient the pages $p^{-1}(\varphi )$
consistently with their boundary~$B_N$. This is the same as saying that
$\partial_{\varphi}$ together with the orientation of the page gives
the orientation of~$N$, cf.~\cite[p.~154]{geig08}.

Following Giroux~\cite{giro02}, we say that a contact structure
$\xi=\ker\alpha$ on $N$ defined by a positive contact form $\alpha$ is
{\em supported\/} by the open book decomposition $(B_N,\frakp )$ if
\begin{enumerate}
\item[(i)] the $2$-form $\rmd\alpha$ induces a positive symplectic form on
each fibre of~$\frakp$, and
\item[(ii)] the $1$-form $\alpha$ induces a positive contact form on~$B_N$.
\end{enumerate}

Giroux has shown that every contact structure on a closed manifold
is supported by an open book. This fact was used by
Bourgeois~\cite{bour02} to show that, starting from a contact
structure on a closed manifold $N$, one can produce a
$T^2$-invariant contact structure on the product of $N$ with a $2$-torus
$T^2$. In particular, all odd-dimensional tori admit a contact structure.

We now want to indicate briefly how the Bourgeois construction
can be interpreted in the framework of the present note.
Thus, let $N$ be a closed, connected manifold
of dimension $\nobreak{2n-1}\geq 3$ with a contact
structure $\xi =\ker\alpha$
supported by an open book decomposition $(B_N,\frakp)$.
Let $(r,\varphi )$ be polar coordinates on the
$D^2$-factor of a neighbourhood $B_N\times D^2$ of the binding~$B_N$,
such that $\frakp\co N\setminus B_N\rightarrow S^1$ is given by
$\varphi$ in that neighbourhood. We choose this neighbourhood so
small that $\alpha$ restricts to a contact form on the manifold
$B_N\times\{ z\}$ for any $z\in D^2$.

Choose a smooth function $\rho$ of the variable $r$ on $B_N\times D^2$
satisfying the requirements that
\begin{itemize}
\item $\rho(r)= r$ near $B_N\equiv B_N\times\{0\}$,
\item $\rho'(r)\geq 0$,
\item $\rho\equiv 1$ near $B_N\times\partial D^2$.
\end{itemize}
Extend $\rho$ to a smooth function on $N$ by setting it equal to $1$
outside $B_N\times D^2$. Then $\ttx:=\rho\cos\varphi$
and $\tty:=\rho\sin\varphi$ are smooth functions on $N$ that
coincide with the Cartesian coordinate functions $x,y$ on the
$D^2$-factor near $B_N\times\{0\}\subset
B_N\times D^2$. The identity
\[ \ttx\,\rmd \tty-\tty\,\rmd \ttx=\rho^2\,\rmd\varphi\]
holds on all of~$N$.

In \cite{bour02}, cf.~\cite[Theorem~7.3.6]{geig08},
it was shown that
\[ \alpha-\ttx\,\rmd\phi+\tty\,\rmd\theta\]
defines a $T^2$-invariant contact structure on $N\times T^2=
N\times S^1_{\phi}\times S^1_{\theta}$. Our
Proposition~\ref{prop:decomposition} then tells us that
the $1$-form
\[ \beta_0:= \alpha -\ttx\,\rmd\phi \]
is an $S^1$-invariant contact form on
\[ \Gamma:= \{\tty=0\}\times S^1_{\phi}=
 \bigl((-\frakp^{-1}(0))\cup_{B_N}\frakp^{-1}(\pi)\bigr)\times S^1_{\phi},\]
and the $2$-form
\[ \omega_{\pm}:=\pm\rmd\Bigl(\frac{\alpha}{\tty}
                 -\frac{\ttx}{\tty}\,\rmd\phi\Bigr)\]
is an $S^1$-invariant symplectic form on $\Int(\pm B_{\pm})$, where
\[ B_+:=\frakp^{-1}([0,\pi])\times S^1_{\phi},\;\;\;
B_-:=\frakp^{-1}([\pi,2\pi])\times S^1_{\phi},\]
i.e.\ $B_{\pm}=\{\pm \tty\geq 0\}\times S^1_{\phi}$.
Conversely, one can check these properties directly and thus
derive Bourgeois's result.

Notice that the $S^1$-invariant contact structure on $\Gamma$
corresponds to the splitting of $\{\tty=0\}$ along
the hypersurface $\{\ttx=0=\tty\}$. The splitting of
$N\times S^1_{\phi}$ into two exact symplectic pieces $\pm B_{\pm}$ comes
from taking the two `halves' of the open book on $N$, crossed
with~$S^1_{\phi}$. This allows one to give simple explicit
symplectic splittings of manifolds of the form $N\times S^1$.

With Corollary~\ref{cor:bundles}
we obtain the following generalisation of Bourgeois's result.

\begin{cor}
Let $N$ be a closed manifold admitting a contact structure.
Then any principal $S^1$-bundle over $N\times S^1$ admits an
$S^1$-invariant contact structure. \qed 
\end{cor}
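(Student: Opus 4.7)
The plan is to reduce to Corollary~\ref{cor:bundles}, which says that if the \emph{trivial} $S^1$-bundle over a given base carries an $S^1$-invariant contact structure, then so does every $S^1$-bundle over that base. With base $B=N\times S^1$, the trivial $S^1$-bundle is $N\times S^1\times S^1=N\times T^2$, so what I need is an $S^1$-invariant contact structure on $N\times T^2$, where $S^1$ acts by rotation on the last factor.

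First, I would invoke Giroux's theorem to obtain an open book decomposition $(B_N,\frakp)$ of $N$ supporting the given contact structure $\xi=\ker\alpha$. With this in hand I can apply the Bourgeois construction recalled in Section~\ref{subsection:Bourgeois}: the $1$-form
\[ \alpha-\ttx\,\rmd\phi+\tty\,\rmd\theta \]
is a $T^2$-invariant contact form on $N\times T^2=N\times S^1_\phi\times S^1_\theta$. In particular, it is invariant under the $S^1_\theta$-action, so it defines an $S^1$-invariant contact structure on the trivial $S^1_\theta$-bundle over $N\times S^1_\phi\cong N\times S^1$. The base splitting and dividing set are exactly those described at the end of Section~\ref{subsection:Bourgeois}, namely $B_\pm=\{\pm\tty\geq 0\}\times S^1_\phi$ with dividing set $\Gamma=\{\tty=0\}\times S^1_\phi$.

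With the trivial $S^1$-bundle over $N\times S^1$ thus equipped with an $S^1$-invariant contact structure, Corollary~\ref{cor:bundles} immediately produces $S^1$-invariant contact structures on every principal $S^1$-bundle over $N\times S^1$, regardless of its Euler class. This completes the proposed proof.

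There is essentially no obstacle: the argument is a direct splicing of Giroux's existence theorem, the Bourgeois construction (reinterpreted via Proposition~\ref{prop:decomposition} as a symplectic splitting of $N\times S^1$), and Corollary~\ref{cor:bundles}. The only point worth flagging is that the $T^2$-invariance of Bourgeois's contact form is stronger than needed, and one must be careful to select the $S^1$-factor on which the bundle projection maps trivially so that the remaining rotational symmetry coincides with the $S^1$-action on the bundle.
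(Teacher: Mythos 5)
Your proposal is correct and follows exactly the route the paper intends: the Bourgeois construction (via Giroux's open book theorem) supplies the $S^1$-invariant contact structure on the trivial bundle $N\times T^2$ over $N\times S^1$, and Corollary~\ref{cor:bundles} then handles all Euler classes. This is precisely why the paper states the corollary with only a \qed.
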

\section{Ideal Liouville domains}
\label{section:Liouville}
The manuscript of~\cite{mnw} became available only after the first
version of the present note had been completed. In this
section, which is based on correspondence with Patrick Massot, we wish to
explain how our construction can be phrased in the more
sophisticated language of that paper.

The following definition, taken from \cite[Section~4.2]{mnw},
is due to Giroux.

\begin{defn}[Giroux]
An \emph{ideal Liouville domain} is a triple $(\Sigma,\omega_{\Int},\eta)$
consisting of a compact oriented $2n$-manifold $\Sigma$
with boundary, a symplectic form $\omega_{\Int}$ on the
interior $\Int (\Sigma)$, and a contact structure $\eta$
on the boundary $\partial\Sigma$, such that there is an auxiliary
$1$-form $\lambda$ on $\Int (\Sigma)$ with the following
properties:
\begin{enumerate}
\item[(i)] $\rmd\lambda=\omega_{\Int}$;
\item[(ii)] for some (and hence any) smooth function
$u\co\Sigma\rightarrow\R_0^+$ with $\partial\Sigma$ as its regular zero set,
the $1$-form $\beta:=u\lambda$ on $\Int (\Sigma)$ extends to $\partial\Sigma$
as a contact form for~$\eta$.
\end{enumerate}
\end{defn}

Given an ideal Liouville domain $(\Sigma,\omega_{\Int},\eta)$, it
is easy to check that the $1$-form $u\lambda+u\,\rmd\theta$
defines an $S^1$-invariant contact structure $\xi$ on $\Sigma\times S^1$.
In~\cite{mnw}, the pair $(\Sigma\times S^1,\xi)$ is called
the \emph{Giroux domain} associated with $(\Sigma,\omega_{\Int},\eta)$.

We now adapt these definitions to nontrivial $S^1$-bundles.

\begin{defn}
Let $\Sigma$ and $\eta$ be as above, $\omega_{\Int}$
a symplectic form on $\Int (\Sigma)$,
and $c$ a cohomology class in $H^2(\Sigma;\Z)$.
The tuple $(\Sigma,\omega_{\Int}, \eta,c)$ is called an
\emph{ideal Liouville domain} if for some
(and hence any) closed $2$-form $\omega$ on $\Sigma$ with
$-[\omega/2\pi]=c\otimes\R\in H^2_{\dR}(\Sigma)$ there exists
a $1$-form $\lambda$ on $\Int(\Sigma)$ such that
$\rmd\lambda=\omega_{\Int}-\omega$ and condition (ii) holds as before.
\end{defn}

The choice of $\omega$ is indeed irrelevant.
If $\omega$ is replaced by $\omega'$ with $[\omega']=[\omega]$,
then $\omega'=\omega+\rmd\mu$ with $\mu$ a $1$-form on~$\Sigma$.
So we simply need to replace $\lambda$ by $\lambda'=\lambda-\mu$.
Since $\mu$ is defined on $\Sigma$ (including the boundary),
the extension of $u\lambda'$ to $\partial\Sigma$
coincides with that of $u\lambda$. Observe that an
ideal Liouville domain $(\Sigma,\omega_{\Int},\eta,0)$
is an ideal Liouville domain in the sense of Giroux.

Now let $\pi\co M\rightarrow\Sigma$ be the principal $S^1$-bundle over
$\Sigma$ of (integral) Euler class~$c$. Choose a connection $1$-form $\psi$
with curvature form $\omega$ on this bundle. As before,
it is easy to check that
the $1$-form $u\psi+u\gamma$ defines a contact structure $\xi$ on~$M$.
Notice that $\xi$ intersects $\pi^{-1}(\partial\Sigma)$
transversely; the intersection  $\xi\cap T(\pi^{-1}(\partial\Sigma))$ is
the tangent hyperplane field given by the kernel of
(the lift of)~$\beta$. We call $(M,\xi)$ the \emph{contactisation}
of $(\Sigma,\omega_{\Int},\eta,c)$.

\begin{defn}
Let $B$ be a closed, oriented manifold of dimension $2n$.
An \emph{ideal Liouville splitting of class $c\in H^2(B;\Z)$}
is a decomposition $B=B_+\cup_{\Gamma}B_-$ along a two-sided
(but not necessarily connected) hypersurface $\Gamma$, oriented
as the boundary of~$B_+$, together
with a contact structure $\eta$ on $\Gamma$ and symplectic forms
$\omega_{\pm}$ on $\Int(\pm B_{\pm})$, such that
\[ (\pm B_{\pm},\omega_{\pm},\eta,\pm c|_{B_{\pm}})\]
are ideal Liouville domains.
\end{defn}

In this terminology, Proposition~\ref{prop:decomposition}
can be read as saying that an $S^1$-invariant contact structure
$\xi=\ker(\beta+u\psi)$ on the principal
$S^1$-bundle $M\rightarrow B$ defined by $c\in H^2(B;\Z)$
leads to a Liouville splitting of $B$ of class~$c$.
It will become apparent presently why we need
to fix an \emph{integral} class $c$ in our notion
of ideal Liouville splitting.

Conversely, assume that
the base $B$ of the $S^1$-bundle $M\rightarrow B$ of Euler class
$c\in H^2(B;\Z)$ admits an ideal Liouville splitting of
class~$c$. The contactisations $(M_{\pm},\xi_{\pm})$ of $B_{\pm}$
induce the same non-singular and $S^1$-invariant
characteristic distribution $\xi_{\pm}\cap T(\pi^{-1}(\Gamma))$.
Therefore, any equivariant diffeomorphism of $\pi^{-1}(\Gamma)$,
in particular the one necessary to recover~$M$,
can be used to obtain a contact manifold by gluing these two
contactisations. This is true thanks to the
following proposition, which is folklore.

\begin{prop}
Let $H$ be a closed hypersurface in a contact manifold $(M,\xi=\ker\alpha)$.
Then the germ of $\xi$ near $H$ is determined by the
$1$-form $\alpha|_{TH}$. In particular, if $\xi|_H$
is transverse to~$H$, then the germ of $\xi$ near $H$ is
determined by the codimension~$1$ distribution $TH\cap\xi$ on~$H$.
\end{prop}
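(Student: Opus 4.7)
The approach is a Moser-type interpolation argument, analogous to the proof of Gray stability but designed to fix the hypersurface $H$ pointwise. Suppose $\xi_0=\ker\alpha_0$ and $\xi_1=\ker\alpha_1$ are two cooriented contact structures defined in neighbourhoods of $H$ with $\alpha_0|_{TH}=\alpha_1|_{TH}$; this matching can always be arranged, after rescaling each $\alpha_i$ by a positive function, from the hypothesis that the two restrictions determine the same cooriented $1$-form on $H$. The goal is to produce a diffeomorphism of a neighbourhood of $H$, equal to the identity on $H$, that carries $\xi_0$ to $\xi_1$.

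I would first verify that the straight-line interpolation $\alpha_t = (1-t)\alpha_0 + t\alpha_1$ is a contact form in a neighbourhood of $H$ uniform in $t\in[0,1]$. Introducing a defining function $\rho$ with $H=\{\rho=0\}$ and writing $\alpha_i = a_i\,\rmd\rho + \beta_i$ with $\beta_i$ free of $\rmd\rho$, the hypothesis becomes $\beta_0|_H = \beta_1|_H$, and a pointwise check at $H$ (using the common coorientation to control the sign of $a_t$) yields $\alpha_t\wedge(\rmd\alpha_t)^n|_H \neq 0$; the conclusion then follows by continuity and the compactness of $H$.

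Next I would apply the Moser trick: seek a time-dependent vector field $X_t$ whose flow $\phi_t$ satisfies $\phi_t^*\alpha_t = \lambda_t\alpha_0$ for some positive function $\lambda_t$, which reduces to the equation
\[ L_{X_t}\alpha_t + \dot\alpha_t = \mu_t\alpha_t. \]
With the ansatz $X_t = h_tR_t + Y_t$, where $R_t$ is the Reeb field of $\alpha_t$ and $Y_t \in \xi_t$, interior product with $R_t$ determines $\mu_t$, and the remaining equation $i_{Y_t}\rmd\alpha_t|_{\xi_t} = -(\rmd h_t + \dot\alpha_t)|_{\xi_t}$ is solved uniquely for $Y_t$ by the non-degeneracy of $\rmd\alpha_t|_{\xi_t}$. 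The crucial observation is that $\dot\alpha_t = \alpha_1-\alpha_0$ vanishes on $TH$, so at each $p\in H$ we have $\dot\alpha_t|_p = c_t(p)\,\rmd\rho|_p$ for some function $c_t$ on $H$. Taking $h_t = -c_t\,\rho$ (with $c_t$ smoothly extended off $H$) makes $(\rmd h_t + \dot\alpha_t)|_H = 0$, so that both $Y_t|_H = 0$ and $h_t|_H = 0$; hence $X_t|_H = 0$, and the flow $\phi_t$ fixes $H$ pointwise while carrying $\xi_0$ onto $\xi_t$.

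The \emph{in particular} clause follows immediately: when $\xi$ is transverse to $H$, the $1$-form $\alpha|_{TH}$ is nowhere zero, and is therefore determined by its kernel $TH\cap\xi$ up to multiplication by a nowhere-zero function---an ambiguity that is already absorbed into the rescaling freedom exploited at the outset. I expect the main technical obstacle to be the contact check for $\alpha_t$, which relies on the matching coorientations of $\xi_0$ and $\xi_1$ along $H$; once that is in place, the Moser calculation and the specific choice of $h_t$ forcing $X_t|_H = 0$ are routine.
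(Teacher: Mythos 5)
Your proposal follows essentially the same Moser-trick strategy as the paper's proof: the same convex interpolation $\alpha_t$ and the same exploitation of $\dot\alpha_t|_{TH}=0$. The one substantive variation is that you add the Reeb term $h_tR_t$ with $h_t=-c_t\rho$ to force $X_t|_H=0$; the paper instead takes $X_t\in\ker\alpha_t$ (so $h_t\equiv 0$) and observes that then $X_t|_H\in TH\cap\xi_t$, so the flow preserves $H$ and its characteristic distribution but need not fix $H$ pointwise. Either version suffices for the stated proposition; yours gives the marginally stronger conclusion of a diffeomorphism fixing $H$.

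The step you should not leave implicit is the verification that $\alpha_t$ is contact along $H$. Appealing to ``a pointwise check using the common coorientation to control the sign of $a_t$'' is not a proof, and the point is not really sign-bookkeeping. The clean observation is the paper's: in a tubular neighbourhood $H\times\R$ write $\alpha_i=\beta_i^r+u_i^r\,\rmd r$; then
\[
\alpha\wedge(\rmd\alpha)^n
=\bigl(-n\beta^r\wedge\tfrac{\partial\beta^r}{\partial r}
 +n\beta^r\wedge\rmd u^r+u^r\,\rmd\beta^r\bigr)
 \wedge(\rmd\beta^r)^{n-1}\wedge\rmd r,
\]
and at $r=0$ the forms $\beta^0$ and hence $\rmd\beta^0$ agree for $i=0,1$ by hypothesis, so the whole expression is affine-linear in the only data $\bigl(\partial\beta^r/\partial r|_{r=0},\,u^0\bigr)$ that actually interpolate between $\alpha_0$ and $\alpha_1$. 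Positivity at the endpoints $t=0,1$ therefore persists for all $t\in[0,1]$; no separate coorientation hypothesis is needed, since consistency of the signs is already forced by $\alpha_0|_{TH}=\alpha_1|_{TH}$ together with both forms being positive contact forms. Finally, a global defining function $\rho$ exists only if $H$ is two-sided; the paper first passes to a double cover to reduce to the orientable case, and you should record that reduction as well.
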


\begin{proof}
By passing to a double cover, if necessary, we may assume that
$H$ is orientable. Let $\xi_0=\ker\alpha_0$ and $\xi_1=\ker\alpha_1$
be two contact structures
near $H$ with $\alpha_0|_{TH}=\alpha_1|_{TH}$.
Identify a neighbourhood of $H$
with $H\times\R$. Then we can write
$\alpha_i=\beta_i^r+u_i^r\,\rmd r$, $i=0,1$,
where $\beta_i^r$, $r\in\R$, is a smooth family
of $1$-forms on~$H$, and $u_i^r\co H\rightarrow\R$
a smooth family of functions. By assumption we have
$\beta_0^0=\beta_1^0$ and hence $\rmd\beta_0^0=\rmd\beta_1^0$.
The contact condition for the
$\alpha_i$ looks as follows, where we drop the subscript $i$
for the moment (and $\dim M=2n+1$):
\[ \bigl(-n\beta^r\wedge (\partial\beta^r/\partial r)+
n\beta^r\wedge\rmd u^r+u^r\,\rmd\beta^r\bigr)\wedge (\rmd\beta^r)^{n-1}
\wedge\rmd r>0.\]
This expression is linear in $\partial\beta^r/\partial r$ and~$u^r$.
It follows that on a neighbourhood of $H$
the convex linear interpolation $\alpha_t=(1-t)\alpha_0+t\alpha_1$
is a contact form for all $t\in [0,1]$.

Now apply the Moser trick to this family, cf.~\cite[Chapter~2]{geig08}.
One finds that $\xi_0$ is isotopic to $\xi_1$ via an isotopy
$\psi_t$ given as the flow of a vector field $X_t\in\ker\alpha_t$
determined by the equation
\[ \dot{\alpha}_t+i_{X_t}\rmd\alpha_t=\mu_t\alpha_t,\]
where the function $\mu_t$ is given by evaluating $\dot{\alpha}_t$
on the Reeb vector field of~$\alpha_t$.

On $TH$ we have $\dot{\alpha}_t=0$. So for $\bfv\in TH\cap\xi_i$
we have $\rmd\alpha_t(X_t,\bfv)=0$. This implies that
$X_t|_H\in TH\cap\xi_i$, i.e.\ the flow of $X_t$ preserves both $H$
and the characteristic distribution. Since $H$ is closed,
$X_t$ integrates to a flow $\psi_t$, $t\in [0,1]$, near~$H$.
\end{proof}

In the present situation all forms in question are $S^1$-invariant.
Then so will be the isotopy~$\psi_t$. This makes
the gluing $S^1$-equivariant.

Beware that, as in the example in Section~\ref{subsection:twist}, there
may be a choice of gluing that can affect the global topology.
Indeed, along two boundary components with the same
characteristic distribution one can glue using any 
$S^1$-equivariant diffeomorphism that preserves this distribution.

Let us consider that example in a little more detail.
The manifold $T^2\times D^2$ becomes an ideal Liouville
domain (in the narrow sense, i.e.\ with $c=0$) by setting
\[ \omega_{\Int}=\rmd\Bigl(\frac{x}{1-r^2}\,\rmd\theta_1-
\frac{y}{1-r^2}\,\rmd\theta_2\Bigr)\]
and $\eta=\ker(\cos\varphi\,\rmd\theta_1-\sin\varphi\,\rmd\theta_2)$.
The gluing maps $g_k$ preserve the characteristic distribution
induced on $T^2\times S^1\times S^1$ by the contactisation,
and hence can be used to glue two such contactisations.

Thus, we see that an ideal Liouville splitting
without a prescribed class $c\in H^2(B;\Z)$ may not lead to
a unique $(M,\xi)$. If there is torsion in $H^2(B_{\pm};\Z)$,
the class $e=c\otimes\R$ does not determine the topology
of the contactisations; if there is a class in $H^2(B;\Z)$ that
restricts to zero on $B_{\pm}$, the topology of the
manifold obtained by gluing the two
contactisations may not be determined by $c|_{B_{\pm}}$.

The following theorem generalises the equivariant classification
result of Lutz~\cite{lutz77}.

\begin{thm}
Let $\pi\co M\rightarrow B$ be the $S^1$-bundle of
Euler class $c\in H^2(B;\Z)$. Two invariant contact
structures on $M$ are equivariantly diffeomorphic if and only
if they induce diffeomorphic Liouville splittings of $B$ of
class~$c$, where it is understood that the diffeomorphism
of $B$ preserves the class~$c$.
\end{thm}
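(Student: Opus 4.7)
The necessity is immediate: an equivariant diffeomorphism $\Phi\co(M,\xi_0)\rightarrow(M,\xi_1)$ descends to a diffeomorphism $\phi$ of~$B$ with $\phi^*c=c$ (as $c$ is the Euler class of the bundle), and by the naturality of Proposition~\ref{prop:decomposition} it carries the Liouville splitting determined by $\xi_0$ onto that determined by~$\xi_1$. For the converse, any diffeomorphism $\phi$ of~$B$ with $\phi^*c=c$ admits an $S^1$-equivariant lift $\tilde{\phi}\co M\rightarrow M$, since $\phi^*M$ and $M$ agree as principal $S^1$-bundles over~$B$. After replacing $\xi_1$ by $\tilde{\phi}^*\xi_1$ we may therefore assume that $\xi_0$ and $\xi_1$ induce the \emph{same} Liouville splitting $(B_+,B_-,\Gamma,\eta,\omega_+,\omega_-)$ on~$B$. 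It then remains to show that any two such $S^1$-invariant contact structures on~$M$ are equivariantly isotopic.

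My plan is to build a smooth family $\alpha_t$, $t\in[0,1]$, of $S^1$-invariant contact forms interpolating invariant defining forms $\alpha_0$ and~$\alpha_1$ of $\xi_0$ and~$\xi_1$, and then to apply the equivariant Moser trick~\cite[Chapter~2]{geig08} to produce the required equivariant isotopy. Writing $\alpha_i=\beta_i+u_i\psi_i$ as in Section~\ref{section:decomposition}, with $u_i=\alpha_i(\partial_{\theta})$, $\psi_i$ an auxiliary connection $1$-form, and $\lambda_i:=\beta_i/u_i$ the induced Liouville primitive on $\Int(\pm B_{\pm})$, I would interpolate each ingredient coherently. After modifying $\psi_1$ by an exact horizontal $1$-form so that its curvature matches that of~$\psi_0$ (call it $\omega$), the connection forms $\psi_0$ and $\psi_1$ sit in an affine space modelled on closed horizontal $1$-forms; the primitives $\lambda_i$ of $\omega_{\pm}-\omega$ likewise differ by a closed $1$-form on $\Int(\pm B_{\pm})$; and the defining functions $u_i$ for~$\Gamma$ with matching sign conventions form a convex set. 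Writing $(u_t,\lambda_t,\psi_t)$ for the corresponding convex combinations, the family $\alpha_t:=u_t\lambda_t+u_t\psi_t$ is $S^1$-invariant and extends smoothly across~$\Gamma$: the ideal Liouville property guarantees that $u_0\lambda_1=(u_0/u_1)\beta_1$ and $u_1\lambda_0=(u_1/u_0)\beta_0$ are smooth on~$M$, since the ratios of defining functions are smooth and positive near~$\Gamma$.

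The main technical obstacle is verifying the contact condition $\alpha_t\wedge(\rmd\alpha_t)^n>0$ for all $t\in[0,1]$. On $\Int(\pm B_{\pm})$, using $\rmd\lambda_t+\omega=\omega_{\pm}$, a direct computation reduces this to $u_t^{n+1}\psi_t\wedge\omega_{\pm}^n>0$, which is automatic from our reduction because $\omega_{\pm}$ is fixed, $\psi_t(\partial_{\theta})\equiv 1$, and $u_t$ retains its sign on each piece. Near~$\Gamma$, where $u_t$ vanishes, the formula of Lemma~\ref{lem:volume} shows that $\alpha_t\wedge(\rmd\alpha_t)^n$ is dominated by $n\,\psi_t\wedge\beta_t\wedge\rmd u_t\wedge(\rmd\beta_t)^{n-1}$ with $\beta_t:=u_t\lambda_t$; positivity here follows because $\beta_t|_{T\Gamma}$ is a positive combination of the contact forms $\beta_0|_{T\Gamma}$ and $\beta_1|_{T\Gamma}$ (weighted by positive smooth functions on~$\Gamma$ built from ratios of the~$u_i$), and hence itself a contact form for the unchanged cooriented structure~$\eta$. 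With positivity established, the $S^1$-invariance of all data makes the Moser vector field automatically $S^1$-invariant, so its flow delivers the required equivariant isotopy and completes the proof.
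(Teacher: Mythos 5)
Your proof is correct and follows essentially the same route as the paper's: reduce via an equivariant lift of $\phi$ to the case of identical splittings, verify that a convex interpolation of invariant contact forms remains contact (exactly on the interiors, and via the contact condition for $\eta$ along $\Gamma$), and conclude with the equivariant Gray--Moser argument. The paper streamlines the interpolation by first rescaling $\alpha_1$ and adjusting the connection form so that $u$ and $\psi$ coincide for both structures, leaving only $\beta$ to be interpolated (which sidesteps your cross-terms $u_i\lambda_j$ and the sign bookkeeping for $\omega_-$ on $B_-$), and it also spells out the one point you leave implicit in the `only if' direction, namely that the induced splitting is independent of the choices of contact form and connection $1$-form.
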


\begin{proof}
We consider the case $\Gamma\neq\emptyset$. All arguments go through
for the case $\Gamma=\emptyset$, i.e.\ $B=B_+$ or $B=B_-$,
with the obvious modifications.

For the `only if' direction we first need to check that
the Liouville splitting induced by an invariant contact structure does
not depend on choices. For notational convenience we only consider~$B_+$.

Given a connection $1$-form $\psi$ on $M$ with curvature $2$-form
$\omega$, and an invariant contact form $\alpha
=\beta+u\psi$, we get the ideal Liouville domain
$(B_+,\omega_+,\eta, c|_{B_+})$, where
\[ \omega_+=\bigl(\rmd (\beta/u)+\omega\bigr)|_{\Int(B_+)}\]
and $\eta=\ker (\beta|_{T\Gamma})$. If the connection $1$-form
is replaced by $\psi'=\psi+\gamma$, with $\gamma$ a
$1$-form lifted from~$B$, then $\beta$ needs to be replaced by
$\beta'=\beta-u\gamma$, and $\omega$ is replaced by
$\omega'=\omega+\rmd\gamma$. The symplectic form $\omega_+$ remains
unchanged.
If the contact form $\alpha$ is replaced by $f\alpha$ for some
invariant function $f\co M\rightarrow\R^+$, then $\beta'=f\beta$
and $u'=fu$, so again $\omega_+$ does not change.
Also, these choices do not affect the contact structure $\eta$ on~$\Gamma$.

An equivariant diffeomorphism of invariant contact structures on $M$
clearly induces a diffeomorphism of the induced ideal Liouville
splittings. Since the $S^1$-bundle is determined by the
class $c\in H^2(B;\Z)$, the induced diffeomorphism of $B$
preserves this class.

Conversely, i.e.\ for the `if' direction,
suppose that $\alpha$ and $\alpha'$ are
invariant contact forms on $M$ inducing
Liouville splittings (of class~$c$) of $B$ that are diffeomorphic via
some diffeomorphism $\phi\co B\rightarrow B$. Write
$\phi^*M$ for the total space of the $S^1$-bundle over $B$ obtained by
pulling back the bundle $M\rightarrow B$. There is an
equivariant diffeomorphism $\tilde{\phi}\co\phi^*M\rightarrow M$
covering~$\phi$, and $\tilde{\phi}^*\alpha'$ induces the
\emph{same} ideal Liouville splitting as~$\alpha$.

Since $\phi$ preserves the class $c$ in integral cohomology,
the bundle $\phi^*M\rightarrow B$ is bundle isomorphic
to $M\rightarrow B$, i.e.\ there is an equivariant
diffeomorphism $M\rightarrow \phi^*M$ over the identity.
So we may regard $\phi^*\alpha'$ as an invariant contact
form on $M$, defining the same ideal Liouville
splitting as~$\alpha$.

Write $\alpha=\beta+u\psi$. Then without loss of generality
we can write $\phi^*\alpha'=\beta'+u\psi$.
Both contact forms induce the same symplectic form $\omega_+$
on $\Int (B_+)$, hence
\[ \rmd(\beta/u)|_{\Int(B_+)}=\rmd(\beta'/u)|_{\Int(B_+)}.\]
Moreover,
$\beta$ and $\beta'$ induce the same contact structure $\eta$
on~$\Gamma$, i.e.\
\[ \ker(\beta|_{T\Gamma})=\ker(\beta'|_{T\Gamma}).\]
Recall from Section~\ref{section:decomposition} that
$\alpha\wedge(\rmd\alpha)^n= \psi\wedge\Omega$, with
\[ \Omega|_{\Int(B_+)}=u^{n+1}\bigl(\rmd(\beta/u)+\omega\bigr)^n\]
and
\[ \Omega|_{\Gamma}=-n\,\rmd u\wedge\beta\wedge(\rmd\beta)^{n-1}.\]
It follows that $(1-t)\beta+t\beta'+u\psi$ is
a contact form for all $t\in [0,1]$. Gray stability then yields
an $S^1$-equivariant contact isotopy between $\ker\alpha$
and $\ker\phi^*\alpha'$.
\end{proof}

The proof of Corollary~\ref{cor:bundles} can likewise be
phrased in terms of ideal Liouville domains.
\begin{ack}
We are grateful to Patrick Massot for explaining to us how
to interpret our result in the language of ideal Liouville
domains. We thank the referee for constructive comments on an earlier
version of this paper.
\end{ack}

\end{document}